\documentclass[sn-mathphys,iicol]{sn-jnl_modified}% Math and Phys. Sciences Reference Style

\usepackage{graphicx}%
\usepackage{multirow}%
\usepackage{amsmath,amssymb,amsfonts}%
\usepackage{amsthm}%
\usepackage{mathrsfs}%
\usepackage[title]{appendix}%
\usepackage{xcolor}%
\usepackage{textcomp}%
\usepackage{manyfoot}%
\usepackage{booktabs}%
\usepackage{algorithm}%
\usepackage{algorithmicx}%
\usepackage{algpseudocode}%
\usepackage{listings}%
\usepackage{physics}% added this
\usepackage{enumerate}% added this
\usepackage{dsfont}% added this (for indicator \mathds{1})

%\jyear{2023}%

% environments
\theoremstyle{definition}
\newtheorem{definition}{Definition}[section]
\newtheorem{proposition}[definition]{Proposition}
\newtheorem{lemma}[definition]{Lemma}
\newtheorem{theorem}[definition]{Theorem}
\newtheorem{corollary}[definition]{Corollary}
\newtheorem{remark}[definition]{Remark}
\newtheorem{example}[definition]{Example}
\newtheorem{alg}[definition]{Algorithm}

% macros
\newcommand{\N}{\mathbb{N}}
\newcommand{\R}{\mathbb{R}}
\newcommand{\sph}{\mathbb{S}}
\newcommand{\B}{\mathcal{B}}

\newcommand{\ind}{\mathds{1}}
\newcommand{\E}{\mathbb{E}}
\newcommand{\gap}{\textsf{gap}}
\newcommand{\IAT}{\text{IAT}}
\newcommand{\G}{\mathcal{G}}
\renewcommand{\H}{\mathcal{H}}
\newcommand{\M}{\mathcal{M}}
\renewcommand{\L}{\mathcal{L}}
\renewcommand{\d}{\text{d}}
\newcommand{\dif}[2]{\frac{\d #1}{\d #2}}
\newcommand{\supp}{\,\textsf{supp}}

% some arrows
\newcommand{\ra}{\rightarrow}

\newcommand{\LRA}{\Leftrightarrow}

% intervals
\newcommand\ccint[2]{[#1,#2]}
\newcommand\ooint[2]{(#1,#2)}
\newcommand\ocint[2]{(#1,#2]}
\newcommand\coint[2]{[#1,#2)}
\newcommand\ccintv[2]{\left[#1,#2\right]}
\newcommand\oointv[2]{\left(#1,#2\right)}

\raggedbottom

\begin{document}

\title[Dimension-independent spectral gap of polar slice sampling]{Dimension-independent spectral gap of polar slice sampling}

\author*[1]{\fnm{Daniel} \sur{Rudolf}}\email{daniel.rudolf@uni-passau.de}

\author[2]{\fnm{Philip} \sur{Sch\"ar}}\email{philip.schaer@uni-jena.de}

\affil*[1]{\orgdiv{Faculty of Computer Science and Mathematics}, \orgname{University of Passau}, \orgaddress{\street{Innstrasse 33}, \city{Passau}, \postcode{94032}, \country{Germany}}}

\affil[2]{\orgdiv{Microscopic Image Analysis Group}, \orgname{Friedrich Schiller University Jena}, \orgaddress{\street{Kollegiengasse 10}, \city{Jena}, \postcode{07743}, \country{Germany}}}

\abstract{
Polar slice sampling, a Markov chain construction for approximate sampling, performs, under suitable assumptions on the target and initial distribution, provably independent of the state space dimension. We extend the aforementioned result of \cite{PolarSS} by developing a theory which identifies conditions, in terms of a generalized level set function, that imply an explicit lower bound on the spectral gap even in a general slice sampling context. Verifying the identified conditions for polar slice sampling yields a lower bound of 1/2 on the spectral gap for arbitrary dimension if the target density is rotationally invariant, log-concave along rays emanating from the origin and sufficiently smooth. The general theoretical result is potentially applicable beyond the polar slice sampling framework.
}

\keywords{MCMC, slice sampling, spectral gap}

\pacs[MSC Classification]{65C05, 60J05, 60J22}

\maketitle

\section{Introduction} \label{Sec:Intro}

% 1. GENERAL PROBLEM
We consider the problem of approximate sampling of a distribution, which is, in the context of Bayesian inference, a permanently present challenge. The goal is to simulate realizations of a random variable that is distributed according to a probability measure of interest $\pi$ defined on $(\R^d, \B(\R^d))$, with $d \in \N$ and $\B(\R^d)$ being the Borel $\sigma$-algebra of $\R^d$. We assume to be able to evaluate a not necessarily normalized Lebesgue density of $\pi$ given by $\varrho: \R^d \ra \R_+$, i.e., for any $A \in \B(\R^d)$ we have
\begin{equation}
	\pi(A) = \frac{1}{C} \int_{A} \varrho(x) \d x ,
	\label{Eq:pi}
\end{equation}
where
\begin{equation*}
	C:= \int_{\R^d} \varrho(x) \d x
	\in \ooint{0}{\infty}
\end{equation*}
is an unknown normalization constant. Because of the only partial knowledge about $\varrho$, the standard approach for dealing with such sampling problems is to construct a Markov chain with limit distribution $\pi$.

% 2. POLAR SLICE SAMPLING
The slice sampling methodology (see e.g.~\cite{BesagGreen}) provides a framework for the construction of a Markov chain $(X_n)_{n\in\N_0}$ with $\pi$-reversible transition kernel, where the distribution of $X_n$ converges (under weak regularity conditions) to the distribution of interest, see e.g. \cite{SliceConv}. We focus here on polar slice sampling (PSS) that exploits the almost surely well-defined factorization $\varrho(x) = p_0(x) p_1(x)$ with
\begin{equation}
	p_0(x) := \norm{x}^{1-d} , \qquad
	p_1(x) := \norm{x}^{d-1} \varrho(x),
	\label{Eq:PSS_fac_intro}
\end{equation}
where $\norm{\cdot}$ denotes the Euclidean norm in $\R^d$.
The choice of this particular factorization in the slice sampling context has been proposed in \cite{PolarSS}. The resulting transition mechanism of the corresponding Markov chain $(X_n)_{n\in\N_0}$ on $(\R^d, \B(\R^d))$ can be presented as follows.\\[-1ex]

\begin{alg} \label{alg}
	Given the target density $\varrho  = p_0 \, p_1$ and the current state $X_{n-1}=x$, PSS w.r.t. $\varrho$ generates the next instance $X_n$ by the following two steps:
	\begin{enumerate}
		\item Draw an auxiliary random variable $T_{n}$ with respect to (w.r.t.) the uniform distribution on $\ooint{0}{p_1(x)}$. Call the realization $t_n$ and define the super level set
		\begin{equation*}
			L(t_n,p_1): = \{z \in \R^d \mid p_1(z) > t_n \}.
		\end{equation*}
		\item Draw $X_{n}$ from the distribution $\mu_{t_n}$ on $\R^d$ that is given by
		\begin{equation*}
			\mu_{t_n}(A) := \frac{\int_{A \cap L(t_n,p_1)} p_0(z) \, \d z}{\int_{L(t_n,p_1)} p_0(z) \, \d z}.
		\end{equation*}
	\end{enumerate}
\end{alg}

\cite{PolarSS} offer an implementation of Algorithm~\ref{alg} using polar coordinates and an acceptance rejection approach w.r.t.~radius and spherical element. Admittedly, already in easy examples, the acceptance probability can be very small, which turns the implementation to be computationally demanding, especially in the case of large $d$. However, in \cite{GPSS} a Gibbsian polar slice sampling methodology has been proposed that on the one hand mimics PSS and on the other hand offers a computationally feasible scheme. Actually our investigation is very much driven by the hope to carry the result about the dimension-independence of PSS over to this related approach.
% 3. NUMERICAL ILLUSTRATION OF THE DIM-INDEP OF PSS
To illustrate the empirically dimension-independent performance of PSS we present the following numerical illustration.\\[-1ex]

\textbf{Motivating numerical illustration.}
We consider the polar and uniform slice sampling Markov chains. The transition mechanism of the latter is exactly as stated in Algorithm~\ref{alg}, except that it sets the factorization of $\varrho$ to $p_0(x) := 1$ and $p_1(x) := \varrho(x)$ for any $x\in \R^d$ (in contrast to \eqref{Eq:PSS_fac_intro}).
For both the unimodal target density\footnote{Numerical experiments in that setting were already conducted in \cite[Section 3]{PolarSS}.} $\varrho(x) = \exp(-\norm{x})$ and the volcano-shaped target density $\varrho(x) = \exp(-(\norm{x}-2)^2)$, we plot in Figure~\ref{Fig:motivation} proxies of the \emph{integrated autocorrelation time} (IAT) of the aforementioned Markov chains, depending on the state space dimension $d$. Since the IAT characterizes the asymptotic mean squared error\footnote{For a function $g\colon \R^d\to \R$ as well as a generic Markov chain $(Y_n)_{n\in\N}$ with initial distribution $\pi$ and $\pi$-reversible transition kernel $P$, the integrated autocorrelation time $\IAT_{g,P}$ satisfies, for example by virtue of \cite[Proposition~3.26]{RudolfDiss}, that
	\begin{equation*}
		\lim_{n\to \infty} n \cdot \E\abs{ \frac{1}{n} \sum_{i=1}^n g(Y_i) - \pi(g) }^2 = \IAT_{g,P} \cdot (\pi(g^2) - \pi(g)^2) ,
	\end{equation*}
	where $\pi(g)=\int_{\R^d} g \d \pi$ with $\pi(g^2)<\infty$ and $\IAT_{g,P}$ defined in \eqref{Eq:IAT} below.
} (and the asymptotic variance within CLTs) of the Markov chain Monte Carlo time average w.r.t.~summary function~$g\colon \R^d\to \R$ we can conclude that the smaller it is, the `better' is the Markov chain. We consider $g(x) = \norm{x}$.
\begin{figure}
	\begin{center}
		\includegraphics[width=0.49\textwidth]{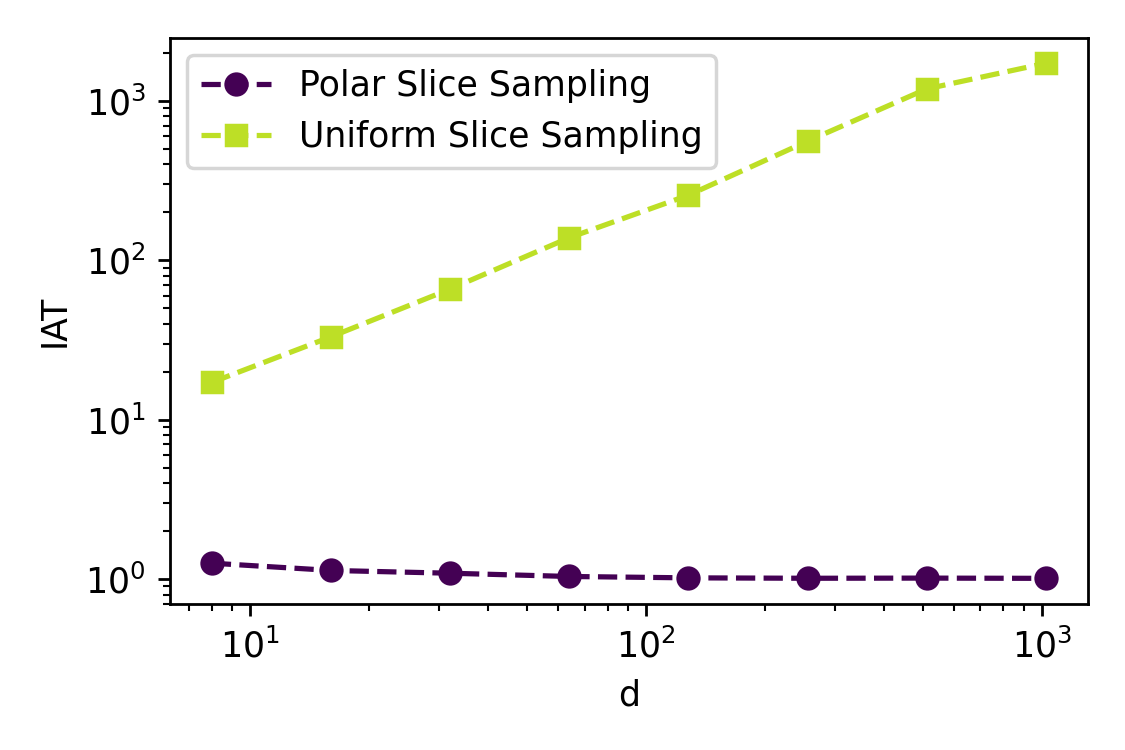}
		\includegraphics[width=0.49\textwidth]{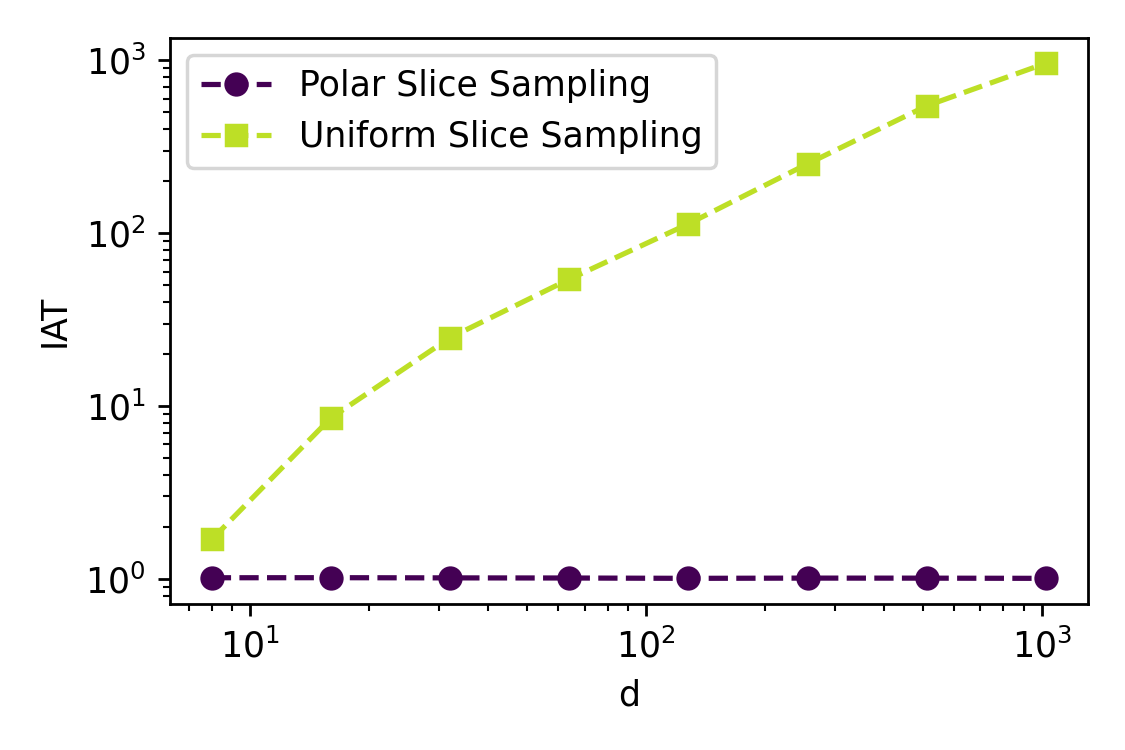}
		\caption{Sample space dimension $d$ versus approximations of the integrated autocorrelation time $\text{IAT}_{g,P}$, as defined in \eqref{Eq:IAT}, computed using the heuristic described in \cite[Chapter 11.5]{Gelman}. The figure on top depicts IATs for the target density $\varrho(x) = \exp(-\norm{x})$ and that below IATs for target density $\varrho(x) = \exp(-(\norm{x} - 2)^2)$. Both figures use the summary function $g(x) = \norm{x}$. Each plotted point represents an average over $n_{\text{rep}} = 10$ separate runs of the samplers, using $n_{\text{it}} = 10^5$ iterations for each sampler and repetition.}
		\label{Fig:motivation}
	\end{center}
\end{figure}
In Figure~\ref{Fig:motivation} it is clearly visible that the IAT of PSS is constantly slightly larger than $1$ regardless of the dimension. In contrast to that the IAT of uniform slice sampling (USS) increases as the state space dimension increases, showing that the efficiency of the corresponding Markov chain degenerates with increasing dimension. That is also theoretically confirmed in \cite{SlavaUSS}. However, it is particularly surprising that PSS exhibits such remarkably `good' constant dimension behavior.\\[-1.5ex]

% 4. EXISTING THEORETICAL RESULTS FOR PSS, PARTICULARLY REGARDING DIM-INDEP
\cite{PolarSS} explain this behavior in their Theorem~7 with Remark~8 by proving that for any rotational invariant $\varrho$ that is log-concave along rays emanating from the origin and any initial state $x\in \R^d$ satisfying $p_1(x)\geq 0.01 \cdot \left(\sup_{w\in\R^d} p_1(w)\right)$ one has
\begin{equation}
	\norm{P^{525}(x,\cdot) - \pi}_{\text{tv}} \leq 0.01 ,
	\label{Eq:tv_small}
\end{equation}
where $P^{525}(x,\cdot) = \mathbb{P}(X_{525}\in \cdot\mid X_0=x)$ and
\begin{equation*}
	\norm{P^{525}(x,\cdot) - \pi}_{\text{tv}} := \sup_{A \in \B(\R^d)} \abs{P^{525}(x,A) - \pi(A)}
\end{equation*}
is the total variation distance between $\pi$ and $P^{525}(x,\cdot)$.
Actually, in \cite[Theorem~7]{PolarSS}, there is no rotational invariance assumption, but an asymmetry parameter appears, and, as long as this does not depend on the dimension, the former result holds by changing the $525$ to some larger number still independent of $d$.

% 5. MOTIVATION OF SPECTRAL GAP AS QUANTITY OF INTEREST
We refine and extend the result \eqref{Eq:tv_small} by providing in the same setting a lower bound of $1/2$ of the spectral gap of the Markov operator of PSS. Even though we postpone the definition and discussion about the spectral gap of a Markov chain (or corresponding transition kernel) to Section~\ref{Sec:BG}, we want to briefly motivate here that it is a crucial object.
A quantitative lower bound of the gap of a transition kernel $P$ corresponding to a Markov chain $(X_n)_{n \in \N_0}$ with stationary distribution $\pi$ implies a number of useful properties. These include geometric convergence (with explicit convergence rate) of the distribution of $X_n$ to $\pi$ as $n \ra \infty$ (see \eqref{Eq:geom_converg} or \cite[Theorem~2.1]{RoRo_hybrid_MCs} or \cite[Theorem~1]{gallegos2022equivalences}), a non-asymptotic error bound for the classical Markov chain Monte Carlo time average \cite[Theorem 3.41]{RudolfDiss}, a central limit theorem (CLT) \cite{gapCLT} and an estimate of the CLT asymptotic variance \cite{gapCLTasvar}. Moreover, it implies an explicit upper bound of the IAT (which follows for example by \eqref{Eq:asymp_var} below) and therefore explains the motivating numerical illustration straightforwardly.

% 6. OUR MAIN RESULTS
Our investigation builds upon the work of \cite{SlavaUSS}. There, among other things, a duality technique that gives sufficient conditions for quantitative lower bounds on the spectral gap of USS has been developed. We extend the duality argumentation to general slice sampling (with a non-specified factorization $\varrho = \varrho_0 \,\varrho_1$) and apply the resulting theory to PSS. More precisely, in the general setting we offer a sufficient condition of the spectral gap in terms of properties of the function $\ell_{\varrho_0,\varrho_1} \colon \ooint{0}{\infty} \to \R_+$ given by
\begin{equation*}
	\ell_{\varrho_0,\varrho_1}(t)
	:= \int_{L(t,\varrho_1)} \varrho_0(x) \,\d x , \qquad t \in \ooint{0}{\infty} ,
\end{equation*}
see Theorem~\ref{Thm:gap_estimate} and Definition~\ref{Def:Lambda_k} below. Applying this result in the context of the PSS factorization yields the dimension-independent lower bound of $1/2$ of the spectral gap, as long as $\varrho$ is rotational invariant, log-concave along rays emanating from the origin and sufficiently smooth, see Theorem~\ref{Thm:PSS_gap_est}.

% 7. OUTLINE
We now provide some guidance trough the structure of the paper. In the next section we introduce our notation and define all required Markov chain related objects. Afterwards, in Section \ref{Sec:gap_est_tool}, we discuss how a number of theoretical results from \cite{SlavaUSS} translate from USS to the general case. In Section \ref{Sec:PSS_gap_est}, we apply the results from Section \ref{Sec:gap_est_tool} to PSS, thereby proving a lower bound on its spectral gap. Concluding remarks with a discussion of our results and an outlook can be found in Section~\ref{Sec:Con}.

\section{Preliminaries} \label{Sec:BG}

We introduce our notation and state some useful facts. All appearing random variables map from a joint sufficiently rich probability space onto their respective state space. With $\lambda$ we denote the Lebesgue measure on $(\R,\B(\R))$ and for the surface measure on the Euclidean unit sphere $\sph^{d-1}$ equipped with its natural Borel $\sigma$-algebra $\B(\sph^{d-1})$ we write $\sigma_{d-1}$. We provide details about kernels.

Let $(G,\G)$ and $(H,\H)$ be measurable spaces. A \textit{transition kernel} on $G \times \H$ is a mapping $P: G \times \H \ra [0,1]$ such that $P(\cdot,A)$ is a measurable function for all $A \in \H$ and $P(x,\cdot) \in \M_1(H)$ for all $x \in G$, where $\M_1(H)$ denotes the set of probability measures on $(H,\H)$. Let $P$ be a transition kernel on $G \times \G$, then $P$ acts on measurable functions $g: G \ra \R$ by
\begin{equation} \label{Eq:MO_right}
	P g (x)
	:= \int_G g(y) P(x, \d y) , 
	\quad x \in G .
\end{equation}
Let $Q$ be a transition kernel on $G \times \H$ and let $\xi \in \M_1(G)$, then $Q$ acts on $\xi$ as 
\begin{equation*}
	\xi Q(A)
	:= \int_G Q(x,A) \xi(\d x) ,
	\quad A \in \H,
\end{equation*}
and defines a probability measure, i.e., $\xi Q \in \M_1(H)$.
Moreover, the \textit{tensor product} of $\xi$ and $Q$ is defined as the probability measure on $(G \times H, \G \times \H)$ determined by
\begin{align*}
	(\xi \otimes Q)(A \times B)
	&:= \int_A Q(x,B) \xi(\d x) \\
	&= \int_A \int_B Q(x,\d y) \xi(\d x) ,
	\; A \in \G, B \in \H .
\end{align*}
Additionally, let $R$ be a transition kernel on $H \times \G$, then the \textit{composition} of $Q$ and $R$ is the transition kernel $Q R$ on $G \times \G$ defined by
\begin{equation*}
	Q R(x, A)
	:= \int_H R(y, A) Q(x, \d y) ,
	\quad x \in G, A \in \G .
\end{equation*}
Using this, for a transition kernel $P$ on $G\times \G$, one recursively defines $P^1 := P$ and $P^n := P P^{n-1}$ for $n \geq 2$.

For a Markov chain $(X_n)_{n\in\N_0}$ on $(G,\G)$ with transition kernel $P$ and initial distribution $\xi\in \M_1(G)$ it is well known that the probability measure $\xi P^n$ coincides with the distribution of $X_n$. We say that the transition kernel $P$ (and the corresponding Markov chain) has invariant distribution $\pi \in \M_1(G)$ if $\pi P = \pi$. Moreover, it is reversible w.r.t. $\pi$ if $(\pi\otimes P)(A\times B) = (\pi \otimes P)(B\times A)$ for all $A,B\in \G$.

We turn to the definition of the spectral gap of a transition kernel $P$ on $G \times \G$ that is reversible w.r.t.~$\pi \in \M_1(G)$ and therefore has $\pi$ as invariant distribution.
With $L_2(\pi)$ we denote the space of measurable functions $g\colon G \ra \R$ satisfying 
\begin{equation*}
	\norm{g}_{2,\pi}^2 :=  \int_G g(x)^2 \pi(\d x) < \infty .
\end{equation*}
Note that $\norm{\cdot}_{2,\pi}$ is a norm on the quotient space of $L_2(\pi)$ under the equivalence relation identifying functions that coincide $\pi$-a.e. It is induced by the inner product $\langle \cdot, \cdot \rangle_{\pi}$ on $L_2(\pi)$ defined by
\begin{equation*}
	\langle g, h \rangle_{\pi}
	:= \int_G g(x) h(x) \pi(\d x) .
\end{equation*}
Observe that $P$ acting on functions $g: G \ra \R$ via $g \mapsto P g$ as in \eqref{Eq:MO_right} defines a linear operator mapping from $L_2(\pi)$ into $L_2(\pi)$. Interpreting $\pi$ as a transition kernel that is constant in its first argument, $\pi$ also induces a linear operator mapping from $L_2(\pi)$ into $L_2(\pi)$, specifically by
\begin{equation}
	\pi g (x)
	= \int_{\R^d} g(y) \pi(\d y) .
	\label{Eq:pi_as_operator}
\end{equation}
This allows us to define the spectral gap of $P$ as
\begin{equation*}
	\gap_{\pi}(P)
	:= 1 - \norm{P - \pi}_{L_2(\pi) \ra L_2(\pi)} ,
\end{equation*}
where $\norm{\cdot}_{L_2(\pi) \ra L_2(\pi)}$ denotes the operator norm w.r.t.~$\norm{\cdot}_{2,\pi}$.

With these formal notions at hand, we may now explicitly state some of the consequences of spectral gap estimates for $\pi$-reversible Markov chains that we already mentioned in the introduction. For example, it is well known, see e.g.~\cite[Lemma~2]{Novak}, that it implies geometric convergence, i.e.,
\begin{equation}
	\norm{\xi P^n -\pi}_{\text{tv}} 
	\leq (1-\gap_{\pi}(P))^n \norm{ \dif{\xi}{\pi}-1 }_{2,\pi},
	\label{Eq:geom_converg}
\end{equation}
where $\norm{\cdot}_{\text{tv}}$ again denotes the total variation distance. 

An explicit lower bound of $\gap_\pi(P)$ also leads to a mean squared error bound of the Markov chain Monte Carlo sample average, see \cite[Theorem~3.41]{RudolfDiss}. Moreover, a classical result of \cite{gapCLT} states that if the initial distribution is the invariant distribution $\pi$ and $g\in L_2(\pi)$ then the $\sqrt{n}$-scaled sample average error 
\begin{equation*}
	\sqrt{n} \left( \frac{1}{n}\sum_{i=1}^{n}g(X_i) -\pi(g) \right)
\end{equation*}
converges weakly to the normal distribution $\mathcal{N}(0,\sigma_{g,P}^2)$ with mean zero and variance
\begin{equation*}
	\sigma_{g,P}^2 = \langle (I+P)(I-P)^{-1}(g-\pi(g)),(g-\pi(g))\rangle_{\pi} ,
\end{equation*}
where $I$ denotes the identity map. The significant quantity $\sigma_{g,P}^2$ satisfies
\begin{equation}
	\IAT_{g,P}  \cdot \norm{g-\pi(g)}_{2,\pi}^2 
	= \sigma_{g,P}^2 
	\leq \frac{2 \norm{g-\pi(g)}_{2,\pi}^2}{\gap_{\pi}(P)},
	\label{Eq:asymp_var}
\end{equation}
where 
\begin{equation}
	\IAT_{g,P} = 1 + 2\sum_{k\geq 1} \gamma_g(k) ,
	\label{Eq:IAT}
\end{equation}
with correlations
\begin{equation*}
	\gamma_g(k) = \text{Corr}(g(X_0),g(X_{k})) ,
\end{equation*}
denotes the integrated autocorrelation time.

\section{Spectral Gap Estimate}

In this section, we first introduce general slice sampling and derive a tool that can be used to establish spectral gap estimates. We then apply it to PSS.

\subsection{General Slice Sampling} \label{Sec:gap_est_tool}

For the probability measure of interest $\pi\in \M_1(\R^d)$ we assume to have an almost sure (w.r.t.~the Lebesgue measure) factorization of the not necessarily normalized density of the form
\begin{equation*}
	\varrho(x) = \varrho_0(x) \varrho_1(x) , 
	\qquad x \in \R^d,
\end{equation*}
with measurable functions $\varrho_i\colon \R^d \to \R_+$ for $i=0,1$. General slice sampling exploits this representation by (essentially) performing the two steps of Algorithm~\ref{alg}, except that $\varrho_0$ takes the role of $p_0$ and $\varrho_1$ the role of $p_1$. We refer to the 1st step as $T$-update and to the 2nd one as $X$-update.
The transition kernels $U_T$ on $\R^d \times \B(\ooint{0}{\infty})$ and $U_X$ on $\ooint{0}{\infty} \times \B(\R^d)$ that correspond to the aforementioned $T$- and $X$-update of Algorithm~\ref{alg} are given by 
\begin{align*}
	U_T(x,A)
	&
	%	= \mathcal{U}(\ooint{0}{\varrho_1(x)})(A) 
	= \frac{\lambda(A\cap \ooint{0}{\varrho_1(x)})}{\lambda(\ooint{0}{\varrho_1(x)})}
	= \frac{\int_A \ind_{L(t,\varrho_1)}(x)\, \d t}{\varrho_1(x)} , \\
	U_X(t,B)
	&= \frac{\int_{B\cap L(t,\varrho_1)} \varrho_0(x) \,\d x}{\int_{L(t,\varrho_1)} \varrho_0(x) \, \d x} =: \mu_t(B) ,
\end{align*}
where $x\in\R^d, A\in \B(\ooint{0}{\infty})$ and $t\in \ooint{0}{\infty},\; B\in \B(\R^d)$.
Thus, the Markov chain $(X_n)_{n\in\N_0}$ of the slice sampling for $\pi$ has transition kernel $P_X = U_T U_X$.
Moreover, the sequence of auxiliary random variables $(T_n)_{n \in \N}$, see the 2nd step of Algorithm~\ref{alg}, is (also) a Markov chain on $(\ooint{0}{\infty},\B(\ooint{0}{\infty}))$ with transition kernel $P_T=U_X U_T$.

We now elaborate on how the investigation of the spectral gap of USS by \cite{SlavaUSS} translates to general slice sampling. As a first step, we provide the invariant distribution of $P_T$, which follows by standard arguments that are also delivered for the convenience of the reader.\\[-1ex]

\begin{lemma} \label{Lem:aux_chain_inv_dist}
	Let $\widetilde{\pi}\in \M_1(\ooint{0}{\infty})$ be determined by the probability density function
	\begin{equation*}
		\widetilde{\varrho}(t)
		= C^{-1} \int_{L(t,\varrho_1)} \varrho_0(x) \d x , 
		\qquad t\in \ooint{0}{\infty},
	\end{equation*}
	such that $\widetilde{\pi}(\d t) = \widetilde{\varrho}(t)\, \d t$. Then $P_T$ is reversible w.r.t.~$\widetilde{\pi}$.
\end{lemma}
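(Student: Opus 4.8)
The plan is to realize general slice sampling as a two-stage Gibbs sampler by exhibiting a single joint probability measure on $\R^d \times \ooint{0}{\infty}$ whose two conditional laws are exactly $U_T$ and $U_X$, and then to read off reversibility of $P_T = U_X U_T$ from the manifest symmetry of the resulting bilinear form. Concretely, I would introduce
\[
	\nu(\d x, \d t) := \frac{1}{C}\, \varrho_0(x)\, \ind_{L(t,\varrho_1)}(x)\, \d x\, \d t
\]
on $(\R^d \times \ooint{0}{\infty},\, \B(\R^d)\times\B(\ooint{0}{\infty}))$. First I would check, via Tonelli's theorem, that $\nu$ is a probability measure and identify its marginals: integrating out $t$ yields $\frac{1}{C}\varrho_0(x)\varrho_1(x)\,\d x = \pi(\d x)$ (the $\pi$-null set on which the factorization $\varrho = \varrho_0\varrho_1$ fails, or on which $\varrho_1$ vanishes, being irrelevant), while integrating out $x$ yields $\frac{1}{C}\,\ell_{\varrho_0,\varrho_1}(t)\,\d t = \widetilde{\varrho}(t)\,\d t = \widetilde{\pi}(\d t)$; in particular this simultaneously verifies $\int_{\ooint{0}{\infty}}\widetilde{\varrho}(t)\,\d t = 1$, so that $\widetilde{\pi}$ is a well-defined probability measure.

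Next I would record the two disintegration identities
\[
	\nu(\d x, \d t) = \pi(\d x)\, U_T(x,\d t) = \widetilde{\pi}(\d t)\, U_X(t, \d x),
\]
which hold because the conditional law of $t$ given $x$ under $\nu$ is the uniform distribution on $\ooint{0}{\varrho_1(x)}$, i.e.\ $U_T(x,\cdot)$, and the conditional law of $x$ given $t$ is $\mu_t = U_X(t,\cdot)$. Equivalently, for every bounded measurable $F\colon \R^d\times \ooint{0}{\infty}\to\R$,
\[
	\int_{\ooint{0}{\infty}}\!\int_{\R^d} F(x,t)\, U_X(t,\d x)\,\widetilde{\pi}(\d t) = \int_{\R^d}\!\int_{\ooint{0}{\infty}} F(x,t)\, U_T(x,\d t)\,\pi(\d x) = \int F \,\d\nu .
\]
Using this with $F(x,t) = \ind_A(t)\, U_T(x,B)$, and $P_T = U_X U_T$, I would compute for $A,B\in\B(\ooint{0}{\infty})$
\[
	(\widetilde{\pi}\otimes P_T)(A\times B)
	= \int_A \int_{\R^d} U_T(x,B)\, U_X(t,\d x)\, \widetilde{\pi}(\d t)
	= \int_{\R^d} U_T(x,A)\, U_T(x,B)\, \pi(\d x).
\]
The right-hand side is symmetric under interchanging $A$ and $B$, which is exactly the reversibility of $P_T$ with respect to $\widetilde{\pi}$ in the sense defined above; the invariance $\widetilde{\pi}P_T = \widetilde{\pi}$ then follows by taking $A = \ooint{0}{\infty}$.

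There is no genuinely hard step here: this is the standard two-stage Gibbs-sampler reversibility argument, and the only points demanding care are measure-theoretic bookkeeping. One must justify the interchanges of integration by Tonelli (all integrands are nonnegative), and verify that the conditional kernels are well defined on sets of full measure — $U_T(x,\cdot)$ for $\pi$-a.e.\ $x$, where $0 < \varrho_1(x) < \infty$, and $U_X(t,\cdot)=\mu_t$ for $\widetilde{\pi}$-a.e.\ $t$, where $0 < \ell_{\varrho_0,\varrho_1}(t) < \infty$ (finiteness a.e.\ being forced by $\int_{\ooint{0}{\infty}}\ell_{\varrho_0,\varrho_1}(t)\,\d t = C < \infty$, and the set $\{\ell_{\varrho_0,\varrho_1} = 0\}$ carrying no $\widetilde{\pi}$-mass) — so that the disintegration identities are meaningful; the exceptional null sets do not affect any of the displayed integrals.
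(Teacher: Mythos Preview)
Your proof is correct and follows essentially the same route as the paper's: both establish the duality identity $\pi\otimes U_T = \widetilde{\pi}\otimes U_X$ (you name the common joint measure $\nu$ explicitly, the paper computes $(\pi\otimes U_T)(A\times B)$ directly), extend it to integrands $F(x,t)$, and then specialize to $F(x,t)=\ind_A(t)\,U_T(x,B)$ to obtain the symmetric expression $\int_{\R^d} U_T(x,A)\,U_T(x,B)\,\pi(\d x)$. The only cosmetic difference is your Gibbs-sampler framing and explicit discussion of the null sets, which the paper leaves implicit.
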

\begin{proof}
	Fix $A \in \B(\R^d)$ and $B \in \B(\ooint{0}{\infty})$. Note that
	\begin{align*}
		&(\pi \otimes U_T)(A \times B) \\
		&= \int_A U_T(x, B) \pi(\d x) \\  % def tensor product
		&= \int_A \frac{\int_B \ind_{L(t,\varrho_1)}(x)\, \d t}{\varrho_1(x)} \cdot C^{-1} \, \varrho_0(x) \varrho_1(x) \d x \\ % defs U_T, \varrho
		&= C^{-1} \int_B \int_{A\cap L(t,\varrho_1)} \varrho_0(x)\, \d x \d t \\
		% cancellation of \varrho_1s, reordering of terms, Fubini
		&= \int_B U_X(t,A) \widetilde{\varrho}(t)\, \d t . % defs U_X, \widetilde{\varrho}
	\end{align*}
	This yields
	\begin{align*}
		1 
		&= (\pi \otimes U_T)(\R^d \times \ooint{0}{\infty}) \\
		&= \int_{0}^\infty U_X(t,\R^d) \widetilde{\varrho}(t)\, \d t 
		= \int_{0}^\infty \widetilde{\varrho}(t)\, \d t ,
	\end{align*}
	proving that $\widetilde{\varrho}$ is indeed normalized. Plugging this fact into the former computation shows
	\begin{equation*}
		(\pi \otimes U_T)(A \times B)
		= \int_B U_X(t,A) \,\widetilde{\pi}(\d t)
		= (\widetilde{\pi} \otimes U_X)(B \times A).
	\end{equation*}
	For any measurable $F\colon \R^d \times \ooint{0}{\infty} \to \R$ (for which one of the following integrals exists) the latter equation extends to
	\begin{align*}
		&\int_{\R^d} \int_0^\infty F(x,t)\, U_T(x, \d t)\, \pi(\d x) \\
		&= \int_0^\infty	\int_{\R^d}  F(x,t)\, U_X(t, \d x)\, \widetilde{\pi}(\d t).
	\end{align*}
	Therefore, we obtain
	\begin{align*}
		&(\widetilde{\pi} \otimes P_T)(B_1 \times B_2) \\
		&= (\widetilde{\pi} \otimes U_X U_T)(B_1 \times B_2) \\
		&= \int_{0}^\infty \ind_{B_1}(t) U_X U_T (t, B_2) \widetilde{\pi}(\d t) \\
		&= \int_{0}^\infty \int_{\R^d} \ind_{B_1}(t) U_T(x,B_2) U_X(t, \d x) \widetilde{\pi}(\d t) \\
		&= \int_{\R^d} \int_{0}^\infty \ind_{B_1}(t) U_T(x,B_2) U_T(x, \d t) \pi(\d x) \\
		&= \int_{\R^d} U_T(x,B_1) U_T(x,B_2) \pi(\d x) 
	\end{align*}
	for $B_1, B_2 \in \B(\ooint{0}{\infty})$. The last expression is symmetric in $B_1$ and $B_2$, such that a backwards argumentation interchanging the roles of $B_1$ and $B_2$ shows that $P_T$ is reversible w.r.t.~$\widetilde{\pi}$. 
\end{proof}
Note that by the same steps one can prove the well-known fact that $P_X$ is reversible w.r.t.~$\pi$. Having this we are able to formulate our spectral gap duality result. The statement follows by the application of Lemmas~\ref{Lem:update_kernels_adjoint} and \ref{Lem:linop_identities} that can be found in the appendix.\\[-1ex]

\begin{theorem} \label{Thm:gap_identity_X_T}
	The linear operators $P_X \colon L_2(\pi) \to L_2(\pi)$ and $P_T \colon L_2(\widetilde{\pi})\to L_2(\widetilde{\pi})$ induced by the corresponding transition kernels via \eqref{Eq:MO_right} satisfy
	\begin{equation*}
		\gap_{\pi}(P_X) = \gap_{\widetilde{\pi}}(P_T) .
	\end{equation*}
\end{theorem}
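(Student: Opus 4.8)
The plan is to exploit that $P_X=U_TU_X$ and $P_T=U_XU_T$ are built from the same pair of operators, composed in opposite orders, and that these two operators are mutually adjoint. Concretely, I would set $A:=U_X$ regarded as a bounded linear operator $L_2(\pi)\to L_2(\widetilde{\pi})$; the adjointness statement (Lemma~\ref{Lem:update_kernels_adjoint}), which is essentially the integral identity already derived in the proof of Lemma~\ref{Lem:aux_chain_inv_dist} specialized to $F(x,t)=g(x)h(t)$, identifies $A^*=U_T\colon L_2(\widetilde{\pi})\to L_2(\pi)$. Unwinding the definition of kernel composition through \eqref{Eq:MO_right} then gives, as operators, $P_X=U_TU_X=A^*A$ on $L_2(\pi)$ and $P_T=U_XU_T=AA^*$ on $L_2(\widetilde{\pi})$. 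So everything reduces to showing $\norm{A^*A-\pi}_{L_2(\pi)\to L_2(\pi)}=\norm{AA^*-\widetilde{\pi}}_{L_2(\widetilde{\pi})\to L_2(\widetilde{\pi})}$, where $\pi$ and $\widetilde{\pi}$ act as the orthogonal projections onto the constant functions.

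First I would record the normalization inputs. Writing $\mathbf{1}$ for the constant function equal to $1$ on the relevant space, one has $\norm{\mathbf{1}}_{2,\pi}=\norm{\mathbf{1}}_{2,\widetilde{\pi}}=1$ — the second using that $\widetilde{\varrho}$ is a probability density, which was verified inside the proof of Lemma~\ref{Lem:aux_chain_inv_dist} — and $A\mathbf{1}=\mathbf{1}$, $A^*\mathbf{1}=\mathbf{1}$ since $U_X(t,\cdot)$ and $U_T(x,\cdot)$ are probability measures. Consequently the operator induced by $\pi$ on $L_2(\pi)$, which is exactly the projection $g\mapsto\langle g,\mathbf{1}\rangle_\pi\,\mathbf{1}$, satisfies $\pi=A^*\widetilde{\pi}A$, and symmetrically $\widetilde{\pi}=A\pi A^*$; both are one-line checks using $\langle Ag,\mathbf{1}\rangle_{\widetilde{\pi}}=\langle g,A^*\mathbf{1}\rangle_\pi=\langle g,\mathbf{1}\rangle_\pi$.

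Then I would carry out the norm manipulation (this is where Lemma~\ref{Lem:linop_identities} enters). Let $Q:=I-\widetilde{\pi}$, the orthogonal projection onto the complement of the constants in $L_2(\widetilde{\pi})$. From $\pi=A^*\widetilde{\pi}A$ and $Q^2=Q=Q^*$ we get
\begin{equation*}
	P_X-\pi = A^*A - A^*\widetilde{\pi}A = A^*(I-\widetilde{\pi})A = (QA)^*(QA),
\end{equation*}
so $\norm{P_X-\pi}_{L_2(\pi)}=\norm{QA}^2=\norm{(QA)(QA)^*}=\norm{Q\,AA^*\,Q}$. It remains to identify $Q\,AA^*\,Q=P_T-\widetilde{\pi}$: since $AA^*\mathbf{1}=\mathbf{1}$ and $AA^*$ is self-adjoint, $P_T=AA^*$ fixes the constants and leaves their orthogonal complement invariant, hence $\widetilde{\pi}P_T=P_T\widetilde{\pi}=\widetilde{\pi}$ and expanding $(I-\widetilde{\pi})P_T(I-\widetilde{\pi})$ yields $P_T-\widetilde{\pi}$. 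Combining the two displays gives $\norm{P_X-\pi}_{L_2(\pi)}=\norm{P_T-\widetilde{\pi}}_{L_2(\widetilde{\pi})}$, and subtracting from $1$ proves $\gap_{\pi}(P_X)=\gap_{\widetilde{\pi}}(P_T)$.

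I expect the only delicate points to be bookkeeping: making sure that passing from kernels to operators via \eqref{Eq:MO_right} turns $P_X=U_TU_X$ into $A^*A$ (and not $AA^*$), and confirming that $A$ is bounded on $L_2(\pi)$ so that the identities $\norm{B}^2=\norm{B^*B}=\norm{BB^*}$ and the adjoint computations are all legitimate — which is exactly what Lemmas~\ref{Lem:update_kernels_adjoint} and \ref{Lem:linop_identities} are designed to furnish, so in the body the argument should be brief.
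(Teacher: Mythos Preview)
Your proposal is correct and is essentially the paper's argument in different clothing: the paper sets $W:=U_T-\widetilde{\pi}$, $W^*:=U_X-\pi$ and checks via Lemma~\ref{Lem:linop_identities} that $WW^*=P_X-\pi$ and $W^*W=P_T-\widetilde{\pi}$, then invokes $\norm{WW^*}=\norm{W^*W}$. Your operator $QA=(I-\widetilde{\pi})U_X$ is exactly $W^*$ (since $\widetilde{\pi}U_X=\pi$ by Lemma~\ref{Lem:linop_identities}\,(ii)), so your factorization $(QA)^*(QA)=P_X-\pi$ and $(QA)(QA)^*=P_T-\widetilde{\pi}$ coincides with the paper's, and the final $C^*$-identity step is identical.
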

\begin{proof}
	Define the linear operators $W := U_T - \widetilde{\pi}$ and $W^{\ast} := U_X - \pi$. By Lemmas \ref{Lem:update_kernels_adjoint} and \ref{Lem:linop_identities} (i), we know that $W^{\ast}$ is the adjoint operator of $W$. Furthermore, by Lemma \ref{Lem:linop_identities} (ii) and the fact that $P_X = U_T U_X$, we get
	\begin{align*}
		W W^{\ast}
		&= (U_T - \widetilde{\pi})(U_X - \pi) \\
		&= U_T U_X - U_T \pi - \widetilde{\pi} U_X + \widetilde{\pi} \pi \\
		&= U_T U_X - \pi
		= P_X - \pi .
	\end{align*}
	Analogously, by Lemma \ref{Lem:linop_identities} (iii) and the fact that $P_T = U_X U_T$, we get
	\begin{align*}
		W^{\ast} W
		&= (U_X - \pi)(U_T - \widetilde{\pi}) \\
		&= U_X U_T - U_X \widetilde{\pi} - \pi U_T + \pi \widetilde{\pi} \\
		&= U_X U_T - \widetilde{\pi}
		= P_T - \widetilde{\pi} .
	\end{align*}
	Now, denoting by $\norm{\cdot}_{L_2(\cdot) \ra L_2(\cdot)}$ the respective operator norms and applying some well-known facts from functional analysis \cite[Theorem V.5.2]{Werner}, we obtain
	\begin{align*}
		\norm{P_X - \pi}_{L_2(\pi) \ra L_2(\pi)}
		&= \norm{W W^{\ast}}_{L_2(\pi) \ra L_2(\pi)} \\
		&= \norm{W^{\ast}}_{L_2(\pi) \ra L_2(\widetilde{\pi})}^2 \\
		&= \norm{W}_{L_2(\widetilde{\pi}) \ra L_2(\pi)}^2 \\
		&= \norm{W^{\ast} W}_{L_2(\widetilde{\pi}) \ra L_2(\widetilde{\pi})} \\
		&= \norm{P_T - \widetilde{\pi}}_{L_2(\widetilde{\pi}) \ra L_2(\widetilde{\pi})} .
	\end{align*}
	By the spectral gap's definition, this implies the claimed identity.
\end{proof}

Keeping in mind that $\varrho=\varrho_0 \, \varrho_1$ we verify next that $P_T$ (essentially) only depends on the target distribution $\pi$ through a univariate function $\ell_{\varrho_0,\varrho_1}$. Here $\ell_{\varrho_0,\varrho_1}$ can be considered as an immediate extension of the level-set function from \cite[e.g.~Lemma~2.4]{SlavaUSS} into the general slice sampling setting. We start with a proper definition.\\[-1ex]

\begin{definition} \label{Def:ell}
	For a factorized density $\varrho = \varrho_0\, \varrho_1$ we define the generalized level-set function
	$\ell_{\varrho_0,\varrho_1}: \ooint{0}{\infty} \ra \R_+$ by
	\begin{equation*}
		\ell_{\varrho_0,\varrho_1}(t)
		:= \int_{L(t,\varrho_1)} \varrho_0(x) \d x, \qquad t \in \ooint{0}{\infty}.
	\end{equation*}
\end{definition}

Observe that $- \ell_{\varrho_0,\varrho_1}$ is a non-decreasing function on $\ooint{0}{\infty}$. Therefore it can serve as the integrator in a Lebesgue-Stieltjes integral, see e.g.~\cite[Section 1.3.2]{Athreya}. We now derive the aforementioned representation of $P_T$ that only depends on $\ell_{\varrho_0,\varrho_1}$.\\[-1ex]

\begin{theorem} \label{Thm:P_T_identity}
	For any $t > 0$ and $B \in \B(\ooint{0}{\infty})$ we have
	\begin{equation*}
		P_T(t,B)
		= \frac{1}{\ell_{\varrho_0,\varrho_1}(t)} \int_t^{\infty} \frac{\lambda(B \, \cap \ooint{0}{s})}{s}\; \d(-\ell_{\varrho_0,\varrho_1})(s) ,
	\end{equation*}
	where the right-hand side denotes a Lebesgue-Stieltjes integral w.r.t.~$- \ell_{\varrho_0,\varrho_1}$.
\end{theorem}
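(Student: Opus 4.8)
The plan is to compute $P_T(t,B) = (U_X U_T)(t,B)$ directly from the definitions of the two update kernels, and then recognize the resulting double integral as a Lebesgue--Stieltjes integral with respect to $-\ell_{\varrho_0,\varrho_1}$. Writing out the composition,
\begin{equation*}
	P_T(t,B) = \int_{\R^d} U_T(x,B)\, U_X(t,\d x) = \frac{1}{\ell_{\varrho_0,\varrho_1}(t)} \int_{L(t,\varrho_1)} \frac{\lambda(B \cap \ooint{0}{\varrho_1(x)})}{\varrho_1(x)}\, \varrho_0(x)\, \d x ,
\end{equation*}
using $U_X(t,\d x) = \mu_t(\d x)$ together with $\mu_t(L(t,\varrho_1)) = 1$ and the explicit form of $U_T$. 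So the core task is to show that the integral over $L(t,\varrho_1)$ of the function $x \mapsto \lambda(B \cap \ooint{0}{\varrho_1(x)})/\varrho_1(x)$ against the measure $\varrho_0(x)\,\d x$ equals $\int_t^\infty \frac{\lambda(B \cap \ooint{0}{s})}{s}\, \d(-\ell_{\varrho_0,\varrho_1})(s)$.

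The key observation is that the integrand depends on $x$ only through the scalar value $\varrho_1(x)$, so the integral should be rewritten as an integral over the "level variable" $s = \varrho_1(x)$. Concretely, I would introduce the pushforward of the measure $\mathbf{1}_{L(t,\varrho_1)}(x)\,\varrho_0(x)\,\d x$ under the map $x \mapsto \varrho_1(x)$; this is a measure on $\ocint{t}{\infty}$ (since $x \in L(t,\varrho_1)$ means $\varrho_1(x) > t$), and by definition of $\ell_{\varrho_0,\varrho_1}$ its "tail at $s$" is exactly $\ell_{\varrho_0,\varrho_1}(s)$ — that is, the pushforward measure of $\ocintv{s}{\infty}$ equals $\int_{L(s,\varrho_1)} \varrho_0(x)\,\d x = \ell_{\varrho_0,\varrho_1}(s)$ for $s \geq t$. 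Since $-\ell_{\varrho_0,\varrho_1}$ is non-decreasing and (by the tail identity just noted) is precisely the distribution-function-type object generating this pushforward, the change-of-variables formula for pushforward measures gives
\begin{equation*}
	\int_{L(t,\varrho_1)} \frac{\lambda(B \cap \ooint{0}{\varrho_1(x)})}{\varrho_1(x)}\, \varrho_0(x)\, \d x = \int_{(t,\infty)} \frac{\lambda(B \cap \ooint{0}{s})}{s}\, \d(-\ell_{\varrho_0,\varrho_1})(s) ,
\end{equation*}
and dividing by $\ell_{\varrho_0,\varrho_1}(t)$ yields the claim. A clean alternative, avoiding explicit talk of pushforwards, is to use the layer-cake identity $\mathbf{1}_{\{\varrho_1(x) > t\}}\, g(\varrho_1(x)) = -\int_t^\infty g(s)\, \d_s \mathbf{1}_{\{\varrho_1(x) > s\}}$ for $g(s) = \lambda(B \cap \ooint{0}{s})/s$ (valid since $g$ is of bounded variation on compacts and continuous, being a ratio where the numerator is Lipschitz with slope $\leq 1$), integrate over $\R^d$ against $\varrho_0(x)\,\d x$, and swap the order of integration via Tonelli, which turns $\int_{\R^d} (-\d_s \mathbf{1}_{\{\varrho_1(x)>s\}})\, \varrho_0(x)\,\d x$ into $\d(-\ell_{\varrho_0,\varrho_1})(s)$.

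The main obstacle is purely measure-theoretic bookkeeping: justifying the interchange of the $x$-integration with the Lebesgue--Stieltjes integration in $s$, and confirming that the resulting Stieltjes integrator is genuinely $-\ell_{\varrho_0,\varrho_1}$ rather than some regularization of it (issues of left- versus right-continuity of $\ell_{\varrho_0,\varrho_1}$ at its jumps, and whether the endpoint $s = t$ is included). These are handled by noting that $-\ell_{\varrho_0,\varrho_1}$ is monotone hence has at most countably many discontinuities, that $\lambda(B \cap \ooint{0}{s})/s$ is bounded on any $[t,\infty)$ cut-off and the total variation of $-\ell_{\varrho_0,\varrho_1}$ on $\ccint{t}{\infty}$ is $\ell_{\varrho_0,\varrho_1}(t) < \infty$ (so Fubini--Tonelli applies to the finite product measure), and that the integral over the singleton $\{t\}$ contributes nothing because we integrate over the open-at-$t$ set $L(t,\varrho_1) = \{\varrho_1 > t\}$, matching the half-open interval $\ocintv{t}{\infty}$ in the pushforward; writing the integral as $\int_t^\infty$ is then unambiguous. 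Everything else is a direct unwinding of definitions.
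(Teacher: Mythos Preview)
Your proposal is correct and follows essentially the same route as the paper: write $P_T(t,B)$ as an integral of $g_B(\varrho_1(x)) := \lambda(B\cap\ooint{0}{\varrho_1(x)})/\varrho_1(x)$ against $\varrho_0(x)\,\d x$ over $L(t,\varrho_1)$, observe that the integrand factors through $\varrho_1$, identify the pushforward of $\varrho_0(x)\,\d x$ under $\varrho_1$ with the Lebesgue--Stieltjes measure of $-\ell_{\varrho_0,\varrho_1}$, and apply the change-of-variables formula. The only cosmetic difference is that the paper pushes forward the full measure $\xi(\d x)=\varrho_0(x)\,\d x$ and carries the indicator $\ind_{\ooint{0}{s}}(t)$ through the computation, whereas you push forward the restricted measure $\ind_{L(t,\varrho_1)}(x)\varrho_0(x)\,\d x$; your additional remarks on endpoint conventions and the Fubini justification are more explicit than the paper's treatment but not substantively different.
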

\begin{proof}
	Define the measure $\xi$ on $(\R^d, \B(\R^d))$ by $\xi(A) := \int_A \varrho_0(x) \d x$ for $A\in \B(\R^d)$, such that $\xi(\d x) = \varrho_0 (x) \d x$ and $\ell_{\varrho_0,\varrho_1}(t) = \xi(L(t,\varrho_1))$.
	As $- \ell_{\varrho_0,\varrho_1}$ is easily seen to be right-continuous, the Lebesgue-Stieltjes measure it generates \cite[Section 1.3.2]{Athreya} is determined by mapping, for any $t_1, t_2 \in \ooint{0}{\infty}$ with $t_1 < t_2$, the interval $\ocint{t_1}{t_2}$ to
	\begin{align*}
		&(- \ell_{\varrho_0,\varrho_1})(t_2) - (- \ell_{\varrho_0,\varrho_1})(t_1) \\
		%&= \ell_{\varrho_0,\varrho_1}(t_1) - \ell_{\varrho_0,\varrho_1}(t_2) \\
		&= \xi(L(t_1,\varrho_1)) - \xi(L(t_2,\varrho_1)) \\
		&= \xi(\{x \in \R^d \mid \varrho_1(x) > t_1\}) - \xi(\{x \in \R^d \mid \varrho_1(x) > t_2\}) \\
		%&= \int_{\{z \in \R^d \mid t_1 < \varrho_1(z) \leq t_2\}} \varrho_0(x) \d x \\
		&= \xi(\{x \in \R^d \mid t_1 < \varrho_1(x) \leq t_2\}) \\
		&= (\xi\circ \varrho_1^{-1})(\ocint{t_1}{t_2}) .
	\end{align*}
	Therefore it is given by $\xi\circ \varrho_1^{-1}$, the pushforward measure of $\xi$ w.r.t.~$\varrho_1$.

	For any $B \in \B(\ooint{0}{\infty})$, let us define a function $g_B: \ooint{0}{\infty} \ra \R_+$ by
	\begin{equation*}
		g_B(s)
		:= \frac{\lambda(B \, \cap \ooint{0}{s})}{s}
	\end{equation*}
	and observe that
	\begin{align*}
		g_B(\varrho_1(x))
		&= \frac{\int_B \ind_{\ooint{0}{\varrho_1(x)}}(t)\, \d t}{\varrho_1(x)} \\
		&= \frac{\int_B \ind_{L(t,\varrho_1)}(x)\, \d t}{\varrho_1(x)}
		= U_T(x,B)
	\end{align*}
	for any $x \in \R^d$. Now, by the change of variables formula \cite[Theorem 3.6.1]{Bogachev} and the fact that $\xi\circ \varrho_1^{-1}$ is the Lebesgue-Stieltjes measure generated by $-\ell_{\varrho_0,\varrho_1}$, we get
	\begin{align*}
		P_T(t,B)
		&= \int_{\R^d} U_T(x, B) U_X(t, \d x) \\ % P_T = U_X U_T, def composition
		&= \frac{\int_{L(t,\varrho_1)} U_T(x,B) \varrho_0(x)  \d x}{\int_{L(t,\varrho_1)} \varrho_0(x) \d x} \\ % def U_X
		&= \frac{1}{\ell_{\varrho_0,\varrho_1}(t)} \int_{L(t,\varrho_1)} U_T(x,B) \xi(\d x) \\ % def \ell, def \xi
		&= \frac{1}{\ell_{\varrho_0,\varrho_1}(t)} \int_{\R^d} g_B(\varrho_1(x)) \ind_{\ooint{0}{\varrho_1(x)}}(t) \xi(\d x) \\ % observation from above
		&= \frac{1}{\ell_{\varrho_0,\varrho_1}(t)} \int_0^{\infty} g_B(s) \ind_{\ooint{0}{s}}(t)\; (\xi\circ \varrho_1^{-1})(\d s) \\ % change of variables
		&= \frac{1}{\ell_{\varrho_0,\varrho_1}(t)} \int_t^{\infty} \frac{\lambda(B \, \cap \ooint{0}{s})}{s} (\xi\circ \varrho_1^{-1})(\d s) \\ % indicator -> integration area, def g_B
		&= \frac{1}{\ell_{\varrho_0,\varrho_1}(t)}\int_t^{\infty} \frac{\lambda(B \, \cap \ooint{0}{s})}{s} \d(-\ell_{\varrho_0,\varrho_1})(s) % fact that f_{1 \ast} \xi is Lebesgue-Stieltjes measure associated to -\ell_{\pi}
	\end{align*}
	for any $t > 0$, $B \in \B(\ooint{0}{\infty})$, which proves the claimed result. 
\end{proof}

By combining the previous two theorems suitably we are able to show that if two distributions have the same function $\ell_{\varrho_0,\varrho_1}$, the spectral gaps of slice sampling for them also coincide.\\[-1ex]

\begin{theorem} \label{Thm:gap_identity_ell}
	For $d, k \in \N$ let $\pi \in \M_1(\R^d)$ and $\nu \in \M_1(\R^k)$ be distributions with not necessarily normalized Lebesgue-densities $\varrho$ and $\eta$ satisfying $\varrho = \varrho_0 \, \varrho_1$ and $\eta = \eta_0 \, \eta_1$ for some measurable functions $\varrho_j \colon \R^d \to \R_+$ and $\eta_j \colon \R^k \to \R_+$ for $j=0,1$.
	If $\ell_{\varrho_0,\varrho_1} \equiv \ell_{\eta_0,\eta_1}$, i.e.,~if $\ell_{\varrho_0,\varrho_1}(t) = \ell_{\eta_0,\eta_1}(t)$ for all $t \in \ooint{0}{\infty}$, then 
	\begin{equation*}
		\gap_{\pi}(P_X^{(\pi)})
		= \gap_{\nu}(P_X^{(\nu)}) ,
	\end{equation*}
	where $P_X^{(\pi)}$ is the transition kernel of slice sampling for $\pi$ based on the factorization $\varrho = \varrho_0 \, \varrho_1$ and $P_X^{(\nu)}$ is the transition kernel of slice sampling for $\nu$ based on the factorization $\eta = \eta_0 \, \eta_1$.
\end{theorem}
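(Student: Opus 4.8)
The plan is to reduce everything to the auxiliary $T$-chains and observe that these depend on the factorized density only through $\ell_{\varrho_0,\varrho_1}$. Concretely, write $P_T^{(\pi)}$, $\widetilde{\pi}$ for the objects of Lemma~\ref{Lem:aux_chain_inv_dist} and Theorem~\ref{Thm:P_T_identity} associated with the factorization $\varrho = \varrho_0\,\varrho_1$, and analogously $P_T^{(\nu)}$, $\widetilde{\nu}$ for $\eta = \eta_0\,\eta_1$. By Theorem~\ref{Thm:gap_identity_X_T} applied on both sides,
\begin{equation*}
	\gap_{\pi}(P_X^{(\pi)}) = \gap_{\widetilde{\pi}}(P_T^{(\pi)}), \qquad \gap_{\nu}(P_X^{(\nu)}) = \gap_{\widetilde{\nu}}(P_T^{(\nu)}),
\end{equation*}
so it suffices to show that the right-hand sides agree. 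For this I would argue that, under the hypothesis $\ell_{\varrho_0,\varrho_1} \equiv \ell_{\eta_0,\eta_1} =: \ell$, the auxiliary chains on $(\ooint{0}{\infty},\B(\ooint{0}{\infty}))$ are literally identical: same invariant distribution and same transition kernel.

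For the transition kernel, Theorem~\ref{Thm:P_T_identity} already expresses $P_T^{(\pi)}(t,B)$ solely in terms of $\ell_{\varrho_0,\varrho_1}$ (as $t$, $B$, and the Lebesgue–Stieltjes integrator $-\ell_{\varrho_0,\varrho_1}$), hence $P_T^{(\pi)}(t,B) = P_T^{(\nu)}(t,B)$ for every $t>0$ with $\ell(t)>0$ and every $B \in \B(\ooint{0}{\infty})$; for $t$ with $\ell(t)=0$ we may define the kernels arbitrarily since, as noted next, such $t$ form a $\widetilde{\pi}$-null (equivalently $\widetilde{\nu}$-null) set and the $L_2$-operator is unaffected. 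For the invariant distribution, the key observation is that the normalization constant is itself a functional of $\ell$: by Tonelli's theorem,
\begin{equation*}
	\int_0^{\infty} \ell_{\varrho_0,\varrho_1}(s)\,\d s = \int_0^{\infty} \int_{L(s,\varrho_1)} \varrho_0(x)\,\d x\,\d s = \int_{\R^d} \varrho_0(x)\,\varrho_1(x)\,\d x = C,
\end{equation*}
which is exactly the normalization identity already extracted inside the proof of Lemma~\ref{Lem:aux_chain_inv_dist}. Consequently $\widetilde{\pi}$ has Lebesgue density $t \mapsto \ell(t)\big/\int_0^{\infty}\ell(s)\,\d s$, and the same formula gives the density of $\widetilde{\nu}$; hence $\widetilde{\pi} = \widetilde{\nu}$ as probability measures on $\ooint{0}{\infty}$.

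Combining these two facts, $L_2(\widetilde{\pi}) = L_2(\widetilde{\nu})$ with identical norms, and the operators induced via \eqref{Eq:MO_right} by $P_T^{(\pi)}$ and $P_T^{(\nu)}$ coincide as operators on this common space (they agree $\widetilde{\pi}$-a.e.\ in the first argument, which is all that matters). Therefore $\norm{P_T^{(\pi)} - \widetilde{\pi}}_{L_2(\widetilde{\pi})\to L_2(\widetilde{\pi})} = \norm{P_T^{(\nu)} - \widetilde{\nu}}_{L_2(\widetilde{\nu})\to L_2(\widetilde{\nu})}$, i.e.\ $\gap_{\widetilde{\pi}}(P_T^{(\pi)}) = \gap_{\widetilde{\nu}}(P_T^{(\nu)})$, and chaining with the two displayed identities above yields $\gap_{\pi}(P_X^{(\pi)}) = \gap_{\nu}(P_X^{(\nu)})$.

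I do not expect a serious obstacle here: the substance is entirely carried by Theorems~\ref{Thm:gap_identity_X_T} and \ref{Thm:P_T_identity}. The only point deserving care is bookkeeping around the set $\{t : \ell(t) = 0\}$ (where Theorem~\ref{Thm:P_T_identity}'s formula is formally undefined), which is handled by noting it is $\widetilde{\pi}$-null, and making explicit that $\widetilde{\pi}$ — and not just $P_T$ — is a functional of $\ell$ alone, via the Tonelli identity $C = \int_0^\infty \ell(s)\,\d s$ above. Note also that this argument is genuinely dimension-agnostic: $\pi$ and $\nu$ may live on $\R^d$ and $\R^k$ with $d \neq k$, since after passing to the $T$-chain the ambient dimension has disappeared entirely.
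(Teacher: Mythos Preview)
Your proof is correct and follows essentially the same route as the paper: reduce via Theorem~\ref{Thm:gap_identity_X_T} to the auxiliary $T$-chains, use Theorem~\ref{Thm:P_T_identity} to identify $P_T^{(\pi)}$ with $P_T^{(\nu)}$, conclude $\widetilde{\pi}=\widetilde{\nu}$, and chain the equalities. The only notable difference is in how $\widetilde{\pi}=\widetilde{\nu}$ is justified: the paper simply remarks that equal transition kernels force equal invariant distributions (implicitly invoking uniqueness), whereas you compute $\widetilde{\pi}$ directly as having density $\ell/\!\int_0^\infty \ell$ via Tonelli, which is more self-contained and sidesteps any appeal to uniqueness; your explicit handling of the $\widetilde{\pi}$-null set $\{\ell=0\}$ is likewise a refinement the paper leaves implicit.
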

\begin{proof}
	By Theorem~\ref{Thm:P_T_identity} and the assumption $\ell_{\varrho_0,\varrho_1} \equiv \ell_{\eta_0,\eta_1}$, we immediately get $P_T^{(\pi)}(t,B) = P_T^{(\nu)}(t,B)$ for all $t \in \ooint{0}{\infty}$ and $B \in \B(\ooint{0}{\infty})$, where $P_T^{(\pi)}$ is the transition kernel of the auxiliary chain $(T_n)_{n \in \N}$ of the slice sampler for $\pi$ and $P_T^{(\nu)}$ the corresponding one for $\nu$. As the kernels of the auxiliary chains coincide, their invariant distributions, say $\widetilde{\pi}, \widetilde{\nu}$ (cf.~Lemma \ref{Lem:aux_chain_inv_dist}), must do so as well, i.e.,~$\widetilde{\pi} \equiv \widetilde{\nu}$.
	Applying Theorem~\ref{Thm:gap_identity_X_T} twice yields
	\begin{equation*}
		\gap_{\pi}(P_X^{(\pi)})
		= \gap_{\widetilde{\pi}}(P_T^{(\pi)})
		= \gap_{\widetilde{\nu}}(P_T^{(\nu)})
		= \gap_{\nu}(P_X^{(\nu)}) .
		\qedhere 
	\end{equation*}
\end{proof}

In contrast to the investigation of \cite{SlavaUSS} the former result shows that two different slice samplers (possibly based on different kinds of factorizations, not just different target distributions) have the same spectral gap as long as their corresponding generalized level-set functions coincide. We illustrate the variability of this result in the following consideration.\\[-1ex]
\begin{example}
	For any $d \in \N$, let $\varrho: \R^d \ra \R_+$ and $\eta: \R \ra \R_+$ be given by
	\begin{equation*}
		\varrho(x) = \norm{x}^{1-d} \exp(-\norm{x}) , 
		\qquad
		\eta(s) 
		= \exp(- c_d \abs{s}) 
	\end{equation*}
	for $x \in \R^d$, $s \in \R$, with $c_d := 2 \sigma_{d-1}(\sph^{d-1})^{-1}$. Let $\pi$ and $\nu$ be the distributions with non-normalized densities $\varrho$ and $\eta$. We now consider PSS for $\pi$ and USS for $\nu$, i.e.,~we factorize $\varrho = p_0 \, p_1$ (cf.~\eqref{Eq:PSS_fac_intro}) and\footnote{By $\mathbf{1}$ we denote the function that returns $1$ regardless of the input argument.} $\eta = \mathbf{1} \cdot \eta$.
	By the polar coordinates formula, see Proposition~\ref{Prop:polar_coords}, we readily obtain
	\begin{align*}
		\ell_{p_0,p_1}(t)
		&= \int_{\R^d} \norm{x}^{1-d} \ind_{\ooint{t}{\infty}}(\exp(-\norm{x})) \d x \\
		&= \sigma_{d-1}(\sph^{d-1}) \cdot \int_0^{\infty} r^{1-d} \ind_{\ooint{-\infty}{-\log t}}(r) r^{d-1} \d r \\
		&= 2 c_d^{-1} \cdot (-\log t)
	\end{align*}
	for $t \in \ooint{0}{1}$ and $\ell_{p_0,p_1} = 0$ for $t \geq 1$.
	Furthermore, for $\eta$ one has
	\begin{align*}
		\ell_{\mathbf{1},\eta}(t)
		&= \int_{\R} \ind_{\ooint{t}{\infty}}(\exp(-c_d \abs{s}) \d s \\
		&= \int_{\R} \ind_{\ooint{-\infty}{c_d^{-1} \cdot (- \log t)}}(\abs{s}) \d s
		= 2 c_d^{-1} \cdot (-\log t) ,
	\end{align*}
	again for $t \in \ooint{0}{1}$, with $\ell_{\mathbf{1},\eta}(t) = 0$ for all $t \geq 1$.
	Overall, this yields
	\begin{equation*}
		\ell_{\varrho_0,\varrho_1}(t)
		= 2 c_d^{-1} \log(t^{-1}) \ind_{\ooint{0}{1}}(t)
		= \ell_{\mathbf{1},\eta}(t)
	\end{equation*}
	for all $t \in \ooint{0}{\infty}$. Hence by Theorem \ref{Thm:gap_identity_ell} the spectral gaps that correspond to the different slice sampling schemes coincide. In particular, from \cite[Example~3.15]{SlavaUSS} we know that $	\gap_{\nu}(P_X^{(\nu)}) \geq 1/2$. 
	Consequently, we obtain for PSS that also $\gap_{\pi}(P_X^{(\pi)}) \geq 1/2$.
\end{example}

The example already indicates how Theorem~\ref{Thm:gap_identity_ell} can be applied to carry the spectral gap from one slice sampling scheme to another. Now, we identify properties of the generalized level set function that allow the formerly stated `carrying over' in a universal fashion, cf.~\cite[Definition~3.9]{SlavaUSS}.\\[-1ex]

\begin{definition} \label{Def:Lambda_k}
	For any $k \in \N$, we define $\Lambda_k$ as the class of continuous functions $\ell \colon \ooint{0}{\infty} \, \ra \R_+$ that satisfy 
	\begin{enumerate}[(i)]
		\item \label{Lambda_1st_item}
		$\lim_{t \ra \infty} \ell(t) = 0$ and $\L := \lim_{t \searrow 0} \ell(t) \in \ocint{0}{\infty}$,
		\item \label{Lambda_2nd_item}
		$\ell$ restricted to its open support
		\begin{equation*}
			\supp(\ell) := \oointv{0}{\sup\{t \in \ooint{0}{\infty} \colon \ell(t) > 0\}}
		\end{equation*}
		is strictly decreasing, and
		\item \label{Lambda_3rd_item}
		the function $g\colon \ooint{0}{\L^{1/k}} \ra \ooint{0}{\infty}$ with $g(r)  =\ell^{-1}(r^k)$ is log-concave.\\[-1ex]
	\end{enumerate}
\end{definition}

\begin{remark}
	Conditions \eqref{Lambda_1st_item} and \eqref{Lambda_2nd_item} together with the assumed continuity of $\ell$ guarantee that $\ell$ restricted to its open support $\supp(\ell)$ maps surjectively onto $I_{\ell} := \ooint{0}{\L}$. As condition \eqref{Lambda_2nd_item} also guarantees injectivity of this restricted function, it must actually be bijective, which gives the existence of the inverse function $\ell^{-1}: I_{\ell} \ra \supp(\ell)$ used in condition \eqref{Lambda_3rd_item}. Observe that, as the inverse of a strictly decreasing function, $\ell^{-1}$ must again be strictly decreasing.
\end{remark}

The properties of the formerly defined classes of functions allow us to construct for $\ell \in \Lambda_k$ a not necessarily normalized density function $\eta \colon \R^k \to \R_+$ for which USS, targeting $\nu\in \M_1(\R^k)$ given by
\begin{equation}
	\nu(A) = \frac{\int_A \eta(z) \d z}{\int_{\R^k} \eta(z)\, \d z} ,
	\qquad A\in \B(\R^k),
	\label{Eq:nu_varphi}
\end{equation}
has a spectral gap of at least $1/(k+1)$ and satisfies $\ell_{\mathbf{1},\eta} \equiv \ell$.
With that and Theorem~\ref{Thm:gap_identity_ell} we can draw conclusions about the spectral gap of generalized slice sampling. The correspondingly formulated statement reads as follows.\\[-1ex]

\begin{theorem} \label{Thm:gap_estimate}
	Given $\varrho_0 \colon \R^d \to \R_+$ and a not necessarily normalized density $\varrho \colon \R^d \to \R_+$, choose $\varrho_1 \colon \R^d \to \R_+$, so that $\varrho = \varrho_0 \,\varrho_1$. Let  $\pi \in \M_1(\R^d)$ be specified by $\varrho$ as in \eqref{Eq:pi}. Let $P_X^{(\pi)}$ be the transition kernel that corresponds to slice sampling for $\pi$ based on $\varrho_0$ and $\varrho_1$. Then, for $k \in \N$ with $\ell_{\varrho_0,\varrho_1} \in \Lambda_k$, we have
	\begin{equation*}
		\gap_{\pi}(P_X^{(\pi)})
		\geq \frac{1}{k+1}.
	\end{equation*}
\end{theorem}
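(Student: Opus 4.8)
The plan is to reduce, via Theorem~\ref{Thm:gap_identity_ell}, the claim about slice sampling for $\pi$ based on the factorization $\varrho=\varrho_0\,\varrho_1$ to a statement about uniform slice sampling (USS) for an auxiliary target $\nu\in\M_1(\R^k)$ whose density $\eta$ is built so that $\ell_{\mathbf 1,\eta}\equiv\ell_{\varrho_0,\varrho_1}$. Concretely, write $\ell:=\ell_{\varrho_0,\varrho_1}\in\Lambda_k$ with $\L=\lim_{t\searrow0}\ell(t)$, and define the radial function $\eta\colon\R^k\to\R_+$ by $\eta(z):=g_\ell(\norm z)$ where $g_\ell$ is the decreasing function on $\ooint0{\L^{1/k}}$ characterized by $\ell(g_\ell(r))=\ldots$ — more precisely, one wants $\eta$ rotationally invariant with level sets matching $\ell$. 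The defining requirement is that for every $t\in\ooint0{\infty}$ the Lebesgue measure of $L(t,\eta)=\{z:\eta(z)>t\}$ equals $\ell(t)$. Since $\eta$ is radial and (to be arranged) radially decreasing, $L(t,\eta)$ is a ball of some radius $R(t)$, so $\lambda(L(t,\eta))=\kappa_k R(t)^k$ with $\kappa_k$ the volume of the unit ball; setting this equal to $\ell(t)$ forces $R(t)=(\ell(t)/\kappa_k)^{1/k}$, hence $\eta(z)=\ell^{-1}\!\left(\kappa_k\norm z^k\right)$ for $\norm z<(\L/\kappa_k)^{1/k}$ (and $0$ otherwise). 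Here Definition~\ref{Def:Lambda_k}\eqref{Lambda_2nd_item} and the remark supply the inverse $\ell^{-1}$, and condition \eqref{Lambda_1st_item} guarantees $\eta$ is finite and integrable (the support is a bounded ball, or if $\L=\infty$ a short separate argument using integrability of $\varrho$ is needed).

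Next I would verify $\ell_{\mathbf 1,\eta}\equiv\ell$: by construction $\ell_{\mathbf1,\eta}(t)=\lambda(L(t,\eta))=\kappa_k R(t)^k=\ell(t)$, using that $\eta$ is strictly radially decreasing on its support (which follows because $\ell^{-1}$ is strictly decreasing) so that the superlevel sets really are the open balls claimed. Then Theorem~\ref{Thm:gap_identity_ell}, applied with the trivial factorization $\eta=\mathbf1\cdot\eta$ on one side and $\varrho=\varrho_0\,\varrho_1$ on the other, gives $\gap_\pi(P_X^{(\pi)})=\gap_\nu(P_X^{(\nu)})$ where $P_X^{(\nu)}$ is exactly the USS kernel for $\nu$.

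It remains to bound $\gap_\nu(P_X^{(\nu)})\geq 1/(k+1)$. For this I would invoke the USS spectral gap theory of \cite{SlavaUSS}: the density $\eta$ is rotationally invariant, and its radial profile $r\mapsto\eta$ along rays is log-concave precisely because of Definition~\ref{Def:Lambda_k}\eqref{Lambda_3rd_item} — the map $g(r)=\ell^{-1}(r^k)$ is assumed log-concave, and $\eta(z)$ is (up to the constant rescaling $r\mapsto\kappa_k^{1/k}r$ inside the argument, which preserves log-concavity) exactly this function of $\norm z$. For a rotationally invariant, log-concave-along-rays target on $\R^k$, \cite{SlavaUSS} establishes the dimension-graded lower bound $\gap\geq 1/(k+1)$ (this is the analogue of their Example~3.15/the general theorem behind it). Chaining the three equalities/inequalities yields $\gap_\pi(P_X^{(\pi)})=\gap_\nu(P_X^{(\nu)})\geq 1/(k+1)$.

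The main obstacle I anticipate is not the gap bound itself — that is quoted from \cite{SlavaUSS} — but the careful construction and verification of $\eta$: one must check that $\eta$ is a genuine (measurable, locally integrable, integrable) density on $\R^k$, that its superlevel sets are exactly the open balls of radius $R(t)$ for \emph{almost every} $t$ (boundary-sphere null sets cause no trouble but should be noted), that $\ell_{\mathbf1,\eta}$ computed as a Lebesgue integral agrees with $\ell$ at \emph{every} $t\in\ooint0\infty$ and not just a.e.\ (continuity of $\ell$, part of $\Lambda_k$, is what rescues this), and that the log-concavity hypothesis \eqref{Lambda_3rd_item} transfers to log-concavity of $\eta$ along rays after the affine change of variable $r\mapsto\kappa_k^{1/k}r$. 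Handling the edge case $\L=\infty$ (unbounded support of $\eta$, so $\eta(0^+)=\infty$) without breaking integrability or the USS hypotheses is the one genuinely delicate point.
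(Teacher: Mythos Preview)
Your proposal is correct and follows essentially the same route as the paper: the paper constructs the identical auxiliary density $\eta(x)=\exp(-\phi(\norm{x}))\ind_{\ooint{0}{\kappa}}(\norm{x})$ with $\phi(r)=-\log\ell^{-1}(\sigma_{k-1}(\sph^{k-1})k^{-1}r^k)$ (so $\eta(x)=\ell^{-1}(\kappa_k\norm{x}^k)$ in your notation), verifies $\ell_{\mathbf 1,\eta}\equiv\ell$ via polar coordinates, and invokes \cite[Corollary~3.1]{SlavaUSS} for the USS bound using that $\phi$ is strictly increasing and convex (your ``strictly radially decreasing'' plus ``log-concave along rays''), then concludes via Theorem~\ref{Thm:gap_identity_ell}. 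The edge cases you worry about are handled lightly in the paper; in particular integrability of $\eta$ follows from $\int_{\R^k}\eta=\int_0^\infty\ell(t)\,\d t=C<\infty$, and the precise citation you were reaching for is Corollary~3.1 rather than Example~3.15.
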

\begin{proof}
	To shorten the notation we set $\ell:=\ell_{\varrho_0,\varrho_1}$ and $\L:= \sup_{t>0} \ell(t) = \lim_{t\searrow 0} \ell(t)$. Fix $k \in \N$ with $\ell \in \Lambda_k$. Let $\kappa := \left( k \, \sigma_{k-1}(\sph^{k-1})^{-1} \L \right)^{1/k} \in \ocint{0}{\infty}$ and define $\phi: \ooint{0}{\kappa} \ra \R$ by
	\begin{equation}
		\phi(r) 
		:= -\log\left( \ell^{-1}\left( \frac{\sigma_{k-1}(\sph^{k-1})}{k} r^k \right) \right).
		\label{Eq:phi}
	\end{equation}
	Then, one readily observes that
	\begin{itemize}
		\item $\phi$ is strictly increasing as composition of the strictly increasing function $r \mapsto \sigma_{k-1}(\sph^{k-1})/k \cdot r^k$ and the strictly decreasing functions $\ell^{-1}$ and $r \mapsto -\log r$,
		\item $\phi$ is convex as composition of the linear function $r \mapsto (\sigma_{k-1}(\sph^{k-1})/k)^{1/k} r$ and the (by Definition~\ref{Def:Lambda_k} \eqref{Lambda_3rd_item}) convex function $r \mapsto -\log \ell^{-1}(r^k)$; and
		\item the inverse of $\phi$ is given by
		\begin{equation}
			\phi^{-1}(s)
			= \left( \frac{k  \cdot \ell(\exp(-s)) }{\sigma_{k-1}(\sph^{k-1})}\right)^{1/k} 
			\label{Eq:phi-1}
		\end{equation}
		for $s > - \log(\sup\{t \in \ooint{0}{\infty} \; \colon \ell(t) > 0 \})$.
	\end{itemize}
	
	Consider $\nu \in \M_1(\R^k)$ as in \eqref{Eq:nu_varphi} determined by the not necessarily normalized Lebesgue-density $\eta \colon \R^k \to \R_+$ given by
	\begin{equation*}
		\eta(x)
		= \exp(-\phi(\norm{x})) \ind_{\ooint{0}{\kappa}}(\norm{x}), \qquad x\in\R^k,
	\end{equation*}
	with $\phi$ from \eqref{Eq:phi}. By the fact that $\phi$ is strictly increasing and convex, \cite[Corollary~3.1]{SlavaUSS} yields that the spectral gap of the transition kernel $P_X^{(\nu)}$ of USS for $\nu$ satisfies
	\begin{equation}
		\gap_{\nu}(P_X^{(\nu)}) 
		\geq \frac{1}{k+1} .
		\label{Eq:basic_gap_est}
	\end{equation}
	
	Now the goal is to show that the level-set function $\ell_{\textbf{1},\eta}$ is identical to $\ell=\ell_{\varrho_0,\varrho_1}$. 
	We obtain for $0 \neq x \in \R^k$ and $t \in \ooint{0}{\sup_{y \in \R^k} \eta(y)}$ that
	\begin{align*}
		\eta(x) > t \quad 
		&\LRA \quad 
		\phi(\norm{x}) < -\log t \quad \text{and} \quad \norm{x} < \kappa \\
		&\LRA \quad 
		\norm{x} < \phi^{-1}(-\log t) \quad \text{and} \quad \norm{x} < \kappa \\
		&\LRA \quad 
		\norm{x} < \phi^{-1}(-\log t) ,
	\end{align*}
	where the second equivalence relies on $\phi^{-1}$ being strictly increasing, and the third equivalence on $\phi^{-1}$ mapping to the domain $\ooint{0}{\kappa}$ of $\phi$, so that in particular $\phi^{-1} < \kappa$. Hence, the super-level set of $\eta$ is
	\begin{equation*}
		L(t,\eta)
		= \{ x\in \R^k \colon \norm{x} < \phi^{-1}(-\log t) \} .
	\end{equation*}
	Consequently, by the polar coordinates formula, see Proposition~\ref{Prop:polar_coords}, we get
	\begin{align*}
		\ell_{\mathbf{1},\eta}(t)
		&= \int_{L(\eta,t)} \mathbf{1}(x) \d x \\
		&= \sigma_{k-1}(\sph^{k-1}) \int_0^{\infty} \ind_{\ccint{0}{\phi^{-1}(-\log t)}}(r)\, r^{k-1}\, \d r \\
		&= \sigma_{k-1}(\sph^{k-1}) \left[\frac{1}{k} r^k \right]_0^{\phi^{-1}(-\log t)} \\
		&= \frac{\sigma_{k-1}(\sph^{k-1})}{k} \phi^{-1}(-\log t)^k = \ell(t),
		\label{Eq:ell_nu_gap_est_thm}
	\end{align*}
	where the last equality follows by plugging in \eqref{Eq:phi-1}.
	
	Finally, by Theorem~\ref{Thm:gap_identity_ell} and \eqref{Eq:basic_gap_est}, we obtain
	\begin{equation*}
		\gap_{\pi}(P_X^{(\pi)})
		= \gap_{\nu}(P_X^{(\nu)})
		\geq \frac{1}{k+1} ,
	\end{equation*}
	which concludes the proof.
\end{proof}

We add an open question about the result. For this the following class of `good', in the sense of the previous theorem, probability measures 
is required.\\[-1ex]
\begin{definition} \label{Def:good_pi}
	Given $\varrho_0\colon \R^d \to \R_+$ and $k\in\N$ define $\Pi_{\varrho_0,k}\subset \M_1(\R^d)$ as a class of probability measures which satisfies that
	\begin{enumerate}
		\item $\pi \in \Pi_{\varrho_0,k}$ is determined by a not necessarily normalized density $\varrho \colon \R^d \to \R_+$ as defined in \eqref{Eq:pi} with $\varrho_1 \colon \R^d \to \R_+$ being chosen so that $\varrho = \varrho_0 \,\varrho_1$; and
		\item $\ell_{\varrho_0,\varrho_1}\in \Lambda_k$.
	\end{enumerate}
\end{definition}
Then, Theorem~\ref{Thm:gap_estimate} yields
\begin{equation*}
	\inf_{\pi\in \Pi_{\varrho_0,k}} \gap_{\pi}(P_X^{(\pi)})
	\geq \frac{1}{k+1} ,
\end{equation*}
i.e., the worst case behavior of the spectral gap on the input class $\Pi_{\varrho_0,k}$ is at least $1/(k+1)$. The question we pose is how good the lower bound actually is. In other words, is there a matching upper bound of the worst case spectral gap that also converges with $k\to \infty$ to zero and if so, does it lead to the fact that the lower bound cannot be improved? Any insight into that direction may lead to a characterization of the spectral gap of generalized slice sampling and therefore indicate its limitations.

We finish this section with an immediate consequence of Theorem~\ref{Thm:gap_estimate} w.r.t.~PSS.\\[-1ex]
\begin{corollary} \label{Cor:PSS}
	For a not necessarily normalized density function $\varrho \colon \R^d \to \R_+$ let $\pi$ be the corresponding distribution as in \eqref{Eq:pi} and define $p_0$, $p_1$ by \eqref{Eq:PSS_fac_intro}. Assume that $\ell_{p_0,p_1} \in \Lambda_k$ for some $k \in \N$. Then $P = P^{(\pi)}_X$, the transition kernel of PSS for $\pi$ satisfies
	\begin{equation*}
		\gap_{\pi}(P)
		\geq \frac{1}{k+1}.
	\end{equation*}
\end{corollary}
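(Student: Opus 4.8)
The plan is essentially to observe that Corollary~\ref{Cor:PSS} is a direct instantiation of Theorem~\ref{Thm:gap_estimate}, with the general factorization $\varrho = \varrho_0\,\varrho_1$ specialized to the polar slice sampling factorization $\varrho = p_0\,p_1$ from \eqref{Eq:PSS_fac_intro}. So the proof will amount to verifying that all hypotheses of Theorem~\ref{Thm:gap_estimate} are met and then quoting its conclusion verbatim.

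First I would fix the setup: given $\varrho\colon\R^d\to\R_+$ a not necessarily normalized density, set $\varrho_0 := p_0$ with $p_0(x) = \norm{x}^{1-d}$ and $\varrho_1 := p_1$ with $p_1(x) = \norm{x}^{d-1}\varrho(x)$, as in \eqref{Eq:PSS_fac_intro}. Note that $p_0$ and $p_1$ are measurable functions $\R^d \to \R_+$ (defined Lebesgue-almost everywhere, since the origin is a null set, which is exactly the "almost sure factorization" framework of Section~\ref{Sec:gap_est_tool}), and that indeed $p_0(x)p_1(x) = \varrho(x)$ for a.e.\ $x$. Thus the pair $(p_0,p_1)$ is an admissible factorization to which the general slice sampling theory, and in particular Theorem~\ref{Thm:gap_estimate}, applies. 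The transition kernel of PSS for $\pi$ is by definition the kernel $P_X^{(\pi)}$ of general slice sampling based on this factorization, i.e.\ $P = P_X^{(\pi)} = U_T U_X$ with $\varrho_0 = p_0$, $\varrho_1 = p_1$.

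Next I would invoke the hypothesis: we are assuming $\ell_{p_0,p_1} \in \Lambda_k$ for some $k \in \N$, which is precisely the condition "$\ell_{\varrho_0,\varrho_1} \in \Lambda_k$" required by Theorem~\ref{Thm:gap_estimate} once we substitute $\varrho_0 = p_0$ and $\varrho_1 = p_1$. All other hypotheses of Theorem~\ref{Thm:gap_estimate} are structural and already verified above (measurability of the $\varrho_j$, the factorization identity, and $\pi$ specified by $\varrho$ via \eqref{Eq:pi}). Applying Theorem~\ref{Thm:gap_estimate} directly then yields $\gap_\pi(P_X^{(\pi)}) \geq \frac{1}{k+1}$, which is the claim.

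There is no real obstacle here — the statement is a corollary in the literal sense, and the "proof" is a one-line specialization. The only point that deserves a sentence of care is the almost-everywhere nature of the factorization (the origin must be excluded for $p_0$ to be finite and for $p_1$ to vanish appropriately), but this is already the convention under which the entire framework of Section~\ref{Sec:gap_est_tool} and hence Theorem~\ref{Thm:gap_estimate} was set up, so nothing new needs to be checked. I would therefore write the proof as: "This follows immediately from Theorem~\ref{Thm:gap_estimate} by choosing $\varrho_0 = p_0$ and $\varrho_1 = p_1$ as in \eqref{Eq:PSS_fac_intro}, noting that this constitutes an almost sure factorization of $\varrho$ in the sense of Section~\ref{Sec:gap_est_tool}, so that $P = P_X^{(\pi)}$ is the corresponding slice sampling kernel and the assumption $\ell_{p_0,p_1}\in\Lambda_k$ is exactly the hypothesis of Theorem~\ref{Thm:gap_estimate}."
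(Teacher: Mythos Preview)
Your proposal is correct and matches the paper's approach exactly: the paper presents Corollary~\ref{Cor:PSS} as an immediate consequence of Theorem~\ref{Thm:gap_estimate} without even writing out a separate proof, and your argument is precisely the specialization $\varrho_0 = p_0$, $\varrho_1 = p_1$ that the paper intends.
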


\begin{remark}
	In the setting of the previous corollary assume that $k$ does not depend on $d$. In that case we have already a dimension-independent lower bound of the spectral gap of PSS. We want to emphasize that even though the spectral gap is independent of $d$, implementing the 2nd step of Algorithm~\ref{alg} may lead to an acceptance probability that decreases w.r.t.~$d$. The already mentioned Gibbsian polar slice sampler of \cite{GPSS} addresses this issue.\\[-1ex]
\end{remark}

We now move on to our next main result, where we apply Theorem \ref{Thm:gap_estimate} (or Corollary~\ref{Cor:PSS}) and provide concrete properties of $\varrho$ that lead to a spectral gap of at least $1/2$ of PSS.

\subsection{Polar Slice Sampling} \label{Sec:PSS_gap_est}

We assume here $d \geq 2$ and note that PSS coincides with USS for $d=1$. Consequently, in that case, spectral gap estimates for the latter carry over to the former. 

The strategy in this section is to consider a specific class of not necessarily normalized density functions $\varrho$ for which we verify that the corresponding PSS generalized level set function $\ell_{p_0,p_1}$ satisfies $\ell_{p_0,p_1} \in \Lambda_1$. Then, by applying Theorem~\ref{Thm:gap_estimate} we readily obtain a dimension independent lower bound of the spectral gap of PSS.
We formulate the main statement and discuss the required assumptions.\\[-1ex]

\begin{theorem} \label{Thm:PSS_gap_est}
	Let $\pi \in \M_1(\R^d)$ be a distribution with not necessarily normalized density $\varrho$ given by
	\begin{equation*}
		\varrho(x)
		= \exp(- \phi(\norm{x})) \ind_{\ooint{0}{\kappa}}(\norm{x}) ,
	\end{equation*}
	where $\kappa \in \ocint{0}{\infty}$ and $\phi: \ooint{0}{\kappa} \ra \R$ is a convex and twice differentiable function that satisfies $\lim_{r \nearrow \kappa} \phi(r) = \infty$.
	Then we have 
	\begin{equation*}
		\gap_{\pi}(P)
		\geq \frac{1}{2},
	\end{equation*}
	where $P=P^{(\pi)}_X$ is the transition kernel of PSS for $\pi$.\\[-1ex]
\end{theorem}
\begin{remark}
	We discuss the conditions and appearing objects of the theorem:
	\begin{itemize}
		\item The parameter $\kappa$ controls the support of $\varrho$: If it is finite, $\varrho$ is only supported on the zero-centered Euclidean ball of radius $\kappa$, and if it is infinite, $\varrho$ is supported on all of $\R^d$.
		\item The densities $\varrho$ to which the theorem applies are rotationally invariant, i.e.~they may only depend on the function's argument $x$ through $\norm{x}$.
		\item The convexity constraint on $\phi$ gives that $\varrho$  is log-concave along rays emanating from the origin, which already emerged to be a useful property for proving theoretical results regarding PSS in \cite{PolarSS}. In particular, it guarantees that the later appearing function $h_1\colon \ooint{0}{\kappa} \ra \R_+$, $r\mapsto r^{d-1} \exp(-\phi(r))$ has interval-like super level sets.
		\item That the function $\phi$ is required to be twice differentiable eases our proof, but we believe that the theorem's claim is still true without this assumption.
		\item The condition $\lim_{r \nearrow \kappa} \phi(r) = \infty$ means that $\varrho$ must tend to zero whenever its argument approaches the boundary of the support. The requirement is always satisfied when $\kappa=\infty$, since $\varrho$ is assumed to be Lebesgue-integrable. 
	\end{itemize}
\end{remark}	
Note that the rotational invariance, convexity and the constraint $\lim_{r \nearrow \kappa} \phi(r) = \infty$ without any further monotonicity requirements on $\phi$ lead to two types of admissible target densities: Unimodal densities, which result from non-decreasing $\phi$, and ``volcano''-shaped densities, which result from $\phi$ that are initially strictly decreasing and then at some point become strictly increasing.
Particularly notable is that the lower-bound of the spectral gap on the class of $\varrho$ specified in the theorem is constant, i.e., does not depend on any continuity or concentration parameter, not even on the state space dimension $d$.
In the course of proving Theorem~\ref{Thm:gap_estimate} we start with a characterization of the corresponding level set functions.\\[-1ex]

\begin{lemma} \label{Lem:gap_est_PSS_ell_formula}
	Assume that the requirements of Theorem \ref{Thm:PSS_gap_est} are satisfied. Factorize $\varrho$ in accordance with PSS, i.e.,~$\varrho = p_0 \, p_1$ (cf.~\eqref{Eq:PSS_fac_intro}). Define $h_1: \ooint{0}{\kappa} \ra \R_+$ by
	\begin{equation*}
		h_1(r) 
		:= r^{d-1} \exp(-\phi(r)) .
	\end{equation*}
	Then, there exists a value $r_{\text{mode}} \in \ooint{0}{\kappa}$ such that
	\begin{align*}
		&\ell_{p_0,p_1}(t) \\
		&=
		\begin{cases}
			\sigma_{d-1}(\sph^{d-1}) (r_{\max}(t) - r_{\min}(t)) & 0 <t < h_1(r_{\text{mode}}) , \\
			0 & t \geq h_1(r_{\text{mode}}) ,
		\end{cases}
	\end{align*}
	%	for all $t \in \oointv{0}{h_1(r_{\text{mode}})}$ and $\ell_{p_0,p_1}(t) = 0$ for $t \geq h_1(r_{\text{mode}})$ and 
	with
	functions
	\begin{align*}
		r_{\max} \colon &\oointv{0}{h_1(r_{\text{mode}})} \ra \oointv{r_{\text{mode}}}{\kappa}, \\
		&\; t \mapsto \left( h_1|_{\ooint{r_{\text{mode}}}{\kappa}} \right)^{-1}(t), \\
		r_{\min} \colon &\oointv{0}{h_1(r_{\text{mode}})} \ra \oointv{0}{r_{\text{mode}}}, \\
		&\; t \mapsto \left( h_1|_{\ooint{0}{r_{\text{mode}}}} \right)^{-1}(t),
	\end{align*}
	that are strictly decreasing and strictly increasing, respectively.
\end{lemma}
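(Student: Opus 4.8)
The plan is to reduce the computation of $\ell_{p_0,p_1}$ to a one-dimensional problem via polar coordinates and then to exploit that the scalar profile $h_1$ is unimodal, which is where the convexity of $\phi$ enters. First I would apply the polar coordinates formula (Proposition~\ref{Prop:polar_coords}) to $\ell_{p_0,p_1}(t)=\int_{L(t,p_1)} p_0(x)\,\d x$. Because $p_1(x)=\norm{x}^{d-1}\exp(-\phi(\norm{x}))=h_1(\norm{x})$ for $\norm{x}\in\ooint{0}{\kappa}$ and $p_1(x)=0$ otherwise, while $p_0(x)=\norm{x}^{1-d}$, the weight $\norm{x}^{1-d}$ cancels against the Jacobian factor $r^{d-1}$ and one is left with
\[
	\ell_{p_0,p_1}(t)=\sigma_{d-1}(\sph^{d-1})\,\lambda\big(\{r\in\ooint{0}{\kappa}\colon h_1(r)>t\}\big),\qquad t>0 .
\]
Everything then hinges on describing the super-level sets of $h_1$.

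Next I would study $h_1$ through its logarithmic derivative $\psi(r):=(\log h_1)'(r)=\tfrac{d-1}{r}-\phi'(r)$, which is continuous since $\phi$ is twice differentiable. As $d\geq 2$, the map $r\mapsto\tfrac{d-1}{r}$ is strictly decreasing and, by convexity of $\phi$, $\phi'$ is non-decreasing; hence $\psi$ is strictly decreasing. I would then determine the boundary behaviour of $\psi$: as $r\searrow 0$ we have $\tfrac{d-1}{r}\to\infty$ while $-\phi'$ is bounded below near $0$ (it is non-increasing in $r$), so $\psi(r)\to\infty$; and $\psi$ becomes negative near $\kappa$. For the latter I would treat two cases. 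If $\kappa<\infty$, convexity together with $\lim_{r\nearrow\kappa}\phi(r)=\infty$ forces $\phi'(r)\to\infty$ (a convex function with bounded derivative on a bounded interval stays bounded), so $\psi(r)\to-\infty$. If $\kappa=\infty$, integrability of $\varrho$, i.e.\ $\int_0^\infty h_1(r)\,\d r<\infty$, excludes $\lim_{r\to\infty}\phi'(r)\leq 0$ (otherwise $\phi$ would be non-increasing and $h_1$ bounded away from $0$, contradicting integrability), hence $\phi'(r)\to L\in\ooint{0}{\infty}$ and $\psi(r)\to -L<0$ (reading $-\infty$ when $L=\infty$). By continuity, strict monotonicity and the intermediate value theorem, $\psi$ has a unique zero $r_{\text{mode}}\in\ooint{0}{\kappa}$ with $\psi>0$ on $\ooint{0}{r_{\text{mode}}}$ and $\psi<0$ on $\ooint{r_{\text{mode}}}{\kappa}$. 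Since $h_1>0$ and $h_1'=h_1\,\psi$, this means $h_1$ is strictly increasing on $\ooint{0}{r_{\text{mode}}}$ and strictly decreasing on $\ooint{r_{\text{mode}}}{\kappa}$, with global maximum $h_1(r_{\text{mode}})$.

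Finally I would identify the level sets. Using $h_1(r)\to 0$ as $r\searrow 0$ (because $r^{d-1}\to 0$ and $e^{-\phi}$ is bounded near $0$ by the supporting-line inequality for $\phi$) and $h_1(r)\to 0$ as $r\nearrow\kappa$ (because $e^{-\phi(r)}\to 0$ with $r^{d-1}$ bounded when $\kappa<\infty$, resp.\ because $\phi$ grows at least linearly when $\kappa=\infty$), the restrictions $h_1|_{\ooint{0}{r_{\text{mode}}}}$ and $h_1|_{\ooint{r_{\text{mode}}}{\kappa}}$ are continuous bijections onto $\ooint{0}{h_1(r_{\text{mode}})}$; denote their inverses by $r_{\min}$ and $r_{\max}$, which are strictly increasing and strictly decreasing respectively, being inverses of such functions. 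For $t\geq h_1(r_{\text{mode}})$ the set $\{r\in\ooint{0}{\kappa}\colon h_1(r)>t\}$ is empty, so $\ell_{p_0,p_1}(t)=0$; for $0<t<h_1(r_{\text{mode}})$ it equals the open interval $\oointv{r_{\min}(t)}{r_{\max}(t)}$, whose Lebesgue measure is $r_{\max}(t)-r_{\min}(t)$, yielding the claimed formula. I expect the only genuinely delicate point to be the endpoint analysis at $\kappa$, specifically invoking Lebesgue-integrability of $\varrho$ in the case $\kappa=\infty$ to ensure that $\psi$ (equivalently $h_1'$) eventually changes sign and hence that $r_{\text{mode}}$ exists; the remaining arguments are routine manipulations of monotone functions and their inverses.
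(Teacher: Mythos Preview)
Your proposal is correct and follows essentially the same route as the paper: reduce $\ell_{p_0,p_1}$ to a one-dimensional Lebesgue measure via polar coordinates, then establish that $h_1$ is unimodal with limits $0$ at both ends of $\ooint{0}{\kappa}$. The only organisational difference is that the paper tracks the auxiliary function $h_2(r)=r\phi'(r)$ and locates $r_{\text{mode}}$ as the point where $h_2$ crosses $d-1$, whereas you work with the logarithmic derivative $\psi(r)=(d-1)/r-\phi'(r)$ and use that it is strictly decreasing (as a sum of a strictly decreasing and a non-increasing function); your formulation is marginally more direct, and in the case $\kappa=\infty$ you appeal to integrability of $\varrho$ where the paper instead uses the standing assumption $\lim_{r\nearrow\kappa}\phi(r)=\infty$ and L'Hospital, but both routes arrive at the same conclusions.
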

\begin{proof}
	The generalized level set function $\ell_{p_0,p_1}$ of $\varrho = p_0 \, p_1$, given as in \eqref{Eq:PSS_fac_intro}, satisfies, by virtue of Proposition \ref{Prop:polar_coords}, for all $t > 0$ that
	\begin{align}
%		\begin{split}
			\ell_{p_0,p_1}(t) 
			&= \int_{\R^d} \norm{x}^{1-d} \ind_{(t,\infty)}(h_1(\norm{x})) \notag \ind_{\ooint{0}{\kappa}}(\norm{x}) \d x \\
			&= \sigma_{d-1}(\sph^{d-1}) \int_0^{\kappa} r^{1-d}\,\ind_{(t,\infty)}(h_1(r)) \, r^{d-1} \d r \notag \\
			&= \sigma_{d-1}(\sph^{d-1}) \lambda(\{r \in \ooint{0}{\kappa} \; \colon h_1(r) > t \}). 
%		\end{split}
		\label{Eq:ell_incomplete}
	\end{align}
	We analyze the function $h_1$ to deduce the claimed representation of $\ell_{p_0,p_1}$ from the former expression.
	Observe that
	\begin{align*}
		h_1^{\prime}(r)
		&= (d-1) r^{d-2} \exp(-\phi(r)) - r^{d-1} \phi^{\prime}(r) \exp(-\phi(r)) \\
		&= (d - 1 - r \phi^{\prime}(r)) r^{d-2} \exp(-\phi(r)) .
	\end{align*}
	Define $h_2: \ooint{0}{\kappa} \ra \R, \; r \mapsto r \phi^{\prime}(r)$ and observe $h_2^{\prime}(r) = \phi^{\prime}(r) + r \phi^{\prime\prime}(r)$.
	Let $r_{\phi} \in \coint{0}{\kappa}$ be such that $\phi$ is decreasing on $\ooint{0}{r_{\phi}}$ (possibly $\emptyset$) and strictly increasing on $\ooint{r_{\phi}}{\kappa}$. That this value exists is an immediate consequence of the convexity of $\phi$ and the requirement $\lim_{r \nearrow \kappa} \phi(r) = \infty$. Now for $r \in \ooint{0}{r_{\phi}}$, where $\phi$ is decreasing ($\phi^{\prime}(r) \leq 0$), we have $h_2(r) \leq 0$. And for $r \in \ooint{r_{\phi}}{\kappa}$, where $\phi$ is not just convex ($\phi^{\prime\prime}(r) \geq 0$) but also strictly increasing ($\phi^{\prime}(r) > 0$), we get $h_2^{\prime}(r) > 0$. 
	
	In cases where $\kappa < \infty$, the property $\lim_{r \nearrow \kappa} \phi(r) = \infty$ implies $\lim_{r \nearrow \kappa} \phi^{\prime}(r) = \infty$ 
	%
	%Assume $\kappa < \infty$. By the mean value theorem, for any $\epsi \in \ooint{0}{\kappa/2}$ there exists an $r_{\epsi} \in \ccint{\kappa/2}{\kappa-\epsi}$ such that
	%\begin{equation*}
	%\phi^{\prime}(r_{\epsi})
	%= \frac{\phi(\kappa - \epsi) - \phi(\kappa/2)}{\kappa/2 - \epsi}_{\textbf{.}}
	%\end{equation*}
	%Taking the limes and applying $\lim_{r \nearrow \kappa} \phi(r) = \infty$ reveals
	%\begin{equation*}
	%\lim_{\epsi \searrow 0} \phi^{\prime}(r_{\epsi})
	%= \lim_{\epsi \searrow 0} \frac{\phi(\kappa - \epsi) - \phi(\kappa/2)}{\kappa/2 - \epsi}
	%= \infty .
	%\end{equation*}
	%Since $\phi^{\prime}$ is continuous, its restriction to the compact set $\ccint{\kappa/2}{\kappa - \epsi}$, for any $\epsi \in \ooint{0}{\kappa/2}$, is a bounded function by the extreme value theorem. From this and the above limit follows that $\phi^{\prime}$ must tend to infinity as its argument approaches $\kappa$.
	%
	and thus $\lim_{r \nearrow \kappa} h_2(r) = \infty$. If $\kappa = \infty$, then the same result follows from $\phi^{\prime}(r)$ being positive for $r > r_{\phi}$ and non-decreasing on account of its slope $\phi^{\prime\prime}$ being non-negative by convexity of $\phi$. 
	
	Combining these observations, we see that $h_2$ is upper-bounded by zero on $\ooint{0}{r_{\phi}}$ and strictly increasing towards $+\infty$ on $\ooint{r_{\phi}}{\kappa}$. %Note that in the case $r_{\phi} = 0$ we get $\lim_{r \searrow 0} h_2(r) = 0$, s.t.~the subsequent argument still works out.
	Therefore, there exists an $r_{\text{mode}} \in \ooint{0}{\kappa}$ such that
	$r\mapsto d - 1 - r \phi^{\prime}(r)$ 
	within $h_1^{\prime}$ is positive on  $\ooint{0}{r_{\text{mode}}}$ with $h_2(r_{\text{mode}}) = d-1$ and negative on  $\ooint{r_{\text{mode}}}{\kappa}$. 
	Consequently, for $r \in \oointv{0}{r_{\text{mode}}}$ we get 
	\begin{equation*}
		h_1^{\prime}(r)
		=  \underbrace{(d - 1 - r \phi^{\prime}(r))}_{>0} \underbrace{r^{d-2}}_{>0} \underbrace{\exp(-\phi(r))}_{>0} > 0,
	\end{equation*}
	whereas for $r \in \oointv{r_{\text{mode}}}{\kappa}$ we have 
	\begin{equation*}
		h_1^{\prime}(r)
		=  \underbrace{(d - 1 - r \phi^{\prime}(r))}_{<0} \underbrace{r^{d-2}}_{>0} \underbrace{\exp(-\phi(r))}_{>0} < 0 .
	\end{equation*}
	In other words $h_1$ is unimodal with mode located at $r_{\text{mode}}$. Moreover, the above shows that $h_1|_{]0,r_{\text{mode}}[}$, the inverse of $r_{\min}$, is a strictly increasing function, which implies that $r_{\min}$ is also strictly increasing. Analogously it follows that $r_{\max}$ is strictly decreasing.
	Besides that we have $h_1(r) < h_1(r_{\text{mode}})$ for $r \neq r_{\text{mode}}$, which by \eqref{Eq:ell_incomplete} readily gives $\ell_{p_0,p_1}(t) = 0$ for $t \geq h_1(r_{\text{mode}})$.
	
	Since $\phi^{\prime}$ is positive on $\ooint{r_{\phi}}{\kappa}$ and non-decreasing (as noted before), there is an $\varepsilon>0$ with $r_\phi+\varepsilon<\kappa$ such that for all $r\geq r_\phi +\varepsilon$ it satisfies $\phi^{\prime}(r) \geq \phi^{\prime}(r_{\phi} + \varepsilon) > 0$. For $\kappa=\infty$, using the former observation, we have	
	by L'Hospital's rule,
	\begin{align*}
		\lim_{r\nearrow\infty} h_1(r)
		&= \lim_{r\nearrow\infty} \frac{r^{d-1}}{\exp(\phi(r))} \\
		&= \lim_{r\nearrow\infty} \frac{(d-1) r^{d-2}}{\phi^{\prime}(r) \exp(\phi(r))} \\ 
		&\leq \frac{d-1}{\phi^{\prime}(r_{\phi} + \varepsilon)} \lim_{r\nearrow\infty} \frac{r^{d-2}}{\exp(\phi(r))}
	\end{align*}
	and by iterating this inductively we get
	\begin{equation*}
		\lim_{r\nearrow\infty} h_1(r)
		\leq \frac{(d-1)!}{\phi^{\prime}(r_{\phi} + \varepsilon)^{d-1}} \lim_{r\nearrow\infty} \frac{1}{\exp(\phi(r))}
		= 0.
	\end{equation*}
	For $\kappa<\infty$ by $\lim_{r \nearrow \kappa} \phi(r) = \infty$ it also  follows $\lim_{r\nearrow\kappa} h_1(r)=0$.
	Consequently we have
	\begin{equation*}
		\lim_{r\searrow 0} h_1(r) 
		= 0
		= \lim_{r\nearrow\kappa} h_1(r),
	\end{equation*}
	where the first equality holds by definition.
	This finally allows us to conclude
	\begin{align*}
		&\{r \in \ooint{0}{\kappa} \; \colon h_1(r) > t \} \\
		&= \oointv{\left( h_1|_{\ooint{0}{r_{\text{mode}}}} \right)^{-1}(t)}{\left( h_1|_{\ooint{r_{\text{mode}}}{\kappa}} \right)^{-1}(t)} \\
		&= \ooint{r_{\min}(t)}{r_{\max}(t)}
	\end{align*}
	for $t \in \oointv{0}{h_1(r_{\text{mode}})}$. The claimed formula for $\ell_{p_0,p_1}$ is obtained by plugging this identity into \eqref{Eq:ell_incomplete}.
\end{proof}

Using the formerly developed tool, we are able to deliver the proof of the theorem.

\begin{proof}[Proof of Theorem \ref{Thm:PSS_gap_est}]
	To verify the statement of Theorem~\ref{Thm:PSS_gap_est} we show that for $\varrho$, satisfying the assumptions formulated there, the corresponding level set function $\ell_{p_0,p_1}$ satisfies $\ell_{p_0,p_1} \in \Lambda_1$. By Lemma~\ref{Lem:gap_est_PSS_ell_formula} it is easily seen that $\ell_{p_0,p_1}$ is a continuous function, such that it is sufficient to check \eqref{Lambda_1st_item}, \eqref{Lambda_2nd_item} and \eqref{Lambda_3rd_item} of Definition~\ref{Def:Lambda_k} for $k=1$.\\
	\textbf{To \eqref{Lambda_1st_item}:} Just by being a generalized level set function, $\ell_{p_0,p_1}$ satisfies the limit properties. \\
	\textbf{To \eqref{Lambda_2nd_item}:}
	The monotonicity properties of $r_{\max}$ and $r_{\min}$ provided by Lemma \ref{Lem:gap_est_PSS_ell_formula} yield that $\ell_{p_0,p_1}$ is strictly decreasing on $\supp(\ell_{p_0,p_1})$. \\
	\textbf{To \eqref{Lambda_3rd_item}:}
	By Proposition \ref{Prop:alternative_cond} it is sufficient to show that
	\begin{equation*}
		h_3(s) := \ell_{p_0,p_1}(\exp(-s))
	\end{equation*} 
	is concave on a set $D$, which, by Lemma \ref{Lem:gap_est_PSS_ell_formula}, is here given by $D = \ooint{-\log h_1(r_{\text{mode}})}{\infty}$. Using the lemma's representation of $\ell_{p_0,p_1}$, we can rewrite $h_3$ as
	\begin{equation*}
		h_3(s) 
		= \sigma_{d-1}(\sph^{d-1}) (r_{\max}(\exp(-s)) - r_{\min}(\exp(-s))).
	\end{equation*}
	Consequently, the concavity of $h_3$ follows by concavity of 
	\begin{align*}
		h_4: 
		&\oointv{-\log h_1(r_{\text{mode}})}{\infty} \ra \oointv{r_{\text{mode}}}{\kappa}, \\
		&\; s \mapsto r_{\max}(\exp(-s))
	\end{align*}
	as well as convexity of
	\begin{align*}
		h_5:
		&\oointv{-\log h_1(r_{\text{mode}})}{\infty} \ra \oointv{0}{r_{\text{mode}}}, \\
		&\; s \mapsto r_{\min}(\exp(-s)) .
	\end{align*}
	Concavity of $h_4$ means convexity of $-h_4$. Clearly, $-h_4$ is continuous and as $h_4$ is the composition of two strictly decreasing functions, it is strictly increasing, so $-h_4$ is strictly decreasing. By Lemma \ref{Lem:convex_inverse}, convexity of $-h_4$ is equivalent to convexity of its inverse $r \mapsto h_4^{-1}(-r)$, which in turn is equivalent to convexity of $h_4^{-1}$ itself (as the graph of one of these functions is just a reflection of that of the other on the axis $r = 0$), which is given by
	\begin{align*}
		h_4^{-1}(r)
		&= - \log r_{\max}^{-1}(r) 
		= - \log h_1|_{\ooint{r_{\text{mode}}}{\kappa}}(r) \\
		&= - \log( r^{d-1} \exp(-\phi(r)) ) 
		= \phi(r) - (d-1) \log r .
	\end{align*}
	However, since $\phi$ is convex by assumption and $-\log$ is known to be convex, the convexity of $h_4^{-1}$ is obvious. 
	
	As the composition of a strictly increasing and a strictly decreasing function, $h_5$ is strictly decreasing. Because $h_5$ is clearly also continuous, applying Lemma \ref{Lem:convex_inverse} again yields that the convexity of $h_5$ is equivalent to that of its inverse $h_5^{-1}$, which is given by
	\begin{align*}
		h_5^{-1}(r)
		&= - \log r_{\min}^{-1}(r) 
		= - \log h_1|_{]0, r_{\text{mode}}[}(r) \\
		&= - \log( r^{d-1} \exp(-\phi(r)) ) 
		= \phi(r) - (d-1) \log r .
	\end{align*}
	Thus, the convexity of $h_5^{-1}$ follows by the same argument as that of $h_4^{-1}$.
	
	Therefore \eqref{Lambda_3rd_item} for $k=1$ is proven and $\ell_{p_0,p_1}\in \Lambda_1$. By Theorem \ref{Thm:gap_estimate} this implies the claimed spectral gap estimate.
\end{proof}

\section{Concluding Remarks} \label{Sec:Con}

% Accomplishments
Driven by empirically observed dimension independent IAT behavior, as documented in the motivating illustration in Section~\ref{Sec:Intro}, and the recent algorithmic contribution about Gibbsian polar slice sampling, we investigated the spectral gap of PSS. For arbitrary dimension, if $\varrho$, the possibly not normalized density function of the distribution of interest, is rotationally invariant, log-concave along rays emanating from the origin and sufficiently smooth we proved a lower bound of $1/2$ on the spectral gap.
Along the way we significantly extended the theory of \cite{SlavaUSS} into the setting of general slice sampling that is based on a factorization $\varrho = \varrho_0 \, \varrho_1$. In Definition~\ref{Def:Lambda_k} we presented a class of functions $\Lambda_k$, already introduced in \cite[Definition~3.9]{SlavaUSS}, which 
provides the required conditions on the level set function $\ell_{\varrho_0,\varrho_1}$ for verifying the lower bound $1/(k+1)$ of the spectral gap for generalized slice sampling. As an immediate consequence this lower bound can be applied in the PSS-setting. Moreover, it served as the main tool for proving the aforementioned dimension-independent spectral gap estimate for PSS.

% Open Questions: beyond the limitations of our theory
We point to open questions, limitations and some directions of future work. Let us start with the question that has already been formulated after Definition \ref{Def:good_pi} on how `good' the lower bound of the spectral gap of generalized slice sampling of Theorem~\ref{Thm:gap_estimate} actually is. We conjecture that at least for some $\varrho_0$ on the class $\Pi_{\varrho_0,k}$ the result cannot be qualitatively improved. We surmise that there is an upper bound `function' $u\colon \N \to \R_+$ with $\lim_{k \to \infty} u(k) = 0$ such that the worst case spectral gap satisfies
\begin{equation*}
	\frac{1}{k+1} \leq	\inf_{\pi \in \Pi_{\varrho_0,k}} \gap_{\pi}(P^{(\pi)}_X) \leq u(k) . 
\end{equation*}
By proving this conjecture, one would show that the parameter $k$ is the right quantity for characterizing the spectral gap of general slice sampling, which points to the limitation that for large $k$ the `efficiency' of slice sampling indeed deteriorates.
Related to the understanding of the limitations of Theorem~\ref{Thm:gap_estimate} one may ask whether an extension into a manifold setting is possible. Recently there have been investigations of slice sampling approaches on the sphere, see e.g.~\cite{SphericalSS,lie2021dimension}, which may serve as a starting point into that direction.

Regarding our explicit dimension-independent spectral gap estimate for PSS, it is reasonable to ask how the proven estimate generalizes to broader classes of target densities, for example rotationally asymmetric ones, or those not centered around the origin. Neither rotational invariance nor being centered around the origin are properties that are exploited in the generic algorithmic description of PSS. Therefore, those seem to be merely exploited within our analysis technique rather than necessary. It would be interesting to find other, more commonly used properties, e.g., strong concavity of smooth log-densities, that yield dimension independent convergence results. Unfortunately, the proof of our result cannot readily be adapted to such cases and it is unknown how the spectral gap behaves.

As explained before, a crucial motivation for studying PSS is Gibbsian polar slice sampling, introduced in \cite{GPSS}. It can be considered a hybrid slice sampler, cf.~\cite{Latuszynski}, that mimics PSS. Under suitable assumptions, it has been shown in \cite{Latuszynski} that hybrid uniform slice sampling has a positive spectral gap whenever USS has one. Explicit lower bounds of the gap are to some extent inherited. It is of course very natural to ask for an extension of this result regarding PSS and the Gibbsian approach.

\section*{Acknowledgements}

We thank the anonymous reviewers for their helpful remarks. PS gratefully acknowledges funding by the Carl Zeiss Stiftung within the program ``CZS Stiftungsprofessuren'' and the project ``Interactive Inference''. Moreover, we are grateful for the support of the DFG within project 432680300 -- Collaborative Research Center 1456 ``Mathematics of Experiment'', subproject B02.

\appendix
\section{Auxiliary Results}

\begin{lemma} \label{Lem:update_kernels_adjoint}
	In the setting of Section \ref{Sec:gap_est_tool}, interpret $U_T$ and $U_X$ as linear operators mapping $L_2(\widetilde{\pi}) \ra L_2(\pi)$ and $L_2(\pi) \ra L_2(\widetilde{\pi})$, then $U_X$ is the adjoint operator of $U_T$, meaning 
	\begin{equation*}
		\langle U_T g, h \rangle_{\pi} = \langle g, U_X h \rangle_{\widetilde{\pi}}
	\end{equation*}
	for all $g \in L_2(\widetilde{\pi})$, $h \in L_2(\pi)$.
\end{lemma}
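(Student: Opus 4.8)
The plan is to derive the adjointness relation directly from the Fubini-type identity already established in the proof of Lemma~\ref{Lem:aux_chain_inv_dist}, namely
\begin{equation*}
	\int_{\R^d} \int_0^\infty F(x,t)\, U_T(x,\d t)\, \pi(\d x) = \int_0^\infty \int_{\R^d} F(x,t)\, U_X(t,\d x)\, \widetilde{\pi}(\d t),
\end{equation*}
specialized to the product integrand $F(x,t) = g(t)\, h(x)$. Once the identity is applicable to this $F$, its left-hand side is $\int_{\R^d} h(x)\,(U_T g)(x)\, \pi(\d x) = \langle U_T g, h\rangle_{\pi}$ and its right-hand side is $\int_0^\infty g(t)\,(U_X h)(t)\, \widetilde{\pi}(\d t) = \langle g, U_X h\rangle_{\widetilde{\pi}}$, which is exactly the claim.

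Before that, I would check that $U_T$ and $U_X$ are well-defined and bounded between the stated weighted $L_2$-spaces. From the computation in the proof of Lemma~\ref{Lem:aux_chain_inv_dist} one reads off the marginal identities $\pi U_T = \widetilde{\pi}$ and $\widetilde{\pi} U_X = \pi$ (take $F = \ind_{\R^d \times B}$, resp.\ $F = \ind_{A \times \ooint{0}{\infty}}$, in the displayed identity). Combining this with Jensen's inequality (valid since $U_T(x,\cdot)$ is a probability measure) gives, for $g \in L_2(\widetilde{\pi})$,
\begin{equation*}
	\norm{U_T g}_{2,\pi}^2 = \int_{\R^d} (U_T g)(x)^2\, \pi(\d x) \le \int_{\R^d} (U_T g^2)(x)\, \pi(\d x) = \int_0^\infty g(t)^2\, \widetilde{\pi}(\d t) = \norm{g}_{2,\widetilde{\pi}}^2 ,
\end{equation*}
so $U_T$ is a contraction from $L_2(\widetilde{\pi})$ to $L_2(\pi)$; interchanging the roles of the two spaces and using $\widetilde{\pi} U_X = \pi$ shows analogously that $U_X$ is a contraction from $L_2(\pi)$ to $L_2(\widetilde{\pi})$. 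In particular all four inner products in the statement are finite.

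Finally, to justify applying the Fubini-type identity to the (possibly signed) product $F = g\,h$, I would first apply it to $|F| = |g|\,|h| \ge 0$ and bound, via the Cauchy--Schwarz inequality and the contraction property,
\begin{equation*}
	\int_{\R^d} \int_0^\infty |g(t)|\,|h(x)|\, U_T(x,\d t)\, \pi(\d x) = \int_{\R^d} |h(x)|\,(U_T|g|)(x)\, \pi(\d x) \le \norm{h}_{2,\pi}\, \norm{g}_{2,\widetilde{\pi}} < \infty ,
\end{equation*}
so $F$ is integrable and the identity applies, yielding the asserted equality after reading off both sides as above. I do not expect a genuine obstacle here; the only point needing a little care is the well-definedness and boundedness of $U_T$ and $U_X$ as operators between the two weighted $L_2$-spaces, which rests on the marginal identities $\pi U_T = \widetilde{\pi}$, $\widetilde{\pi} U_X = \pi$ (already contained in the proof of Lemma~\ref{Lem:aux_chain_inv_dist}) together with Jensen's inequality, after which everything reduces to a direct application of the Fubini-type identity to a product integrand.
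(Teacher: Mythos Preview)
Your proposal is correct and is essentially the same argument as the paper's: both reduce to a Fubini swap on the product integrand $g(t)h(x)$ against the joint measure $(\pi \otimes U_T) = (\widetilde{\pi} \otimes U_X)$. The paper simply redoes the explicit density computation from the proof of Lemma~\ref{Lem:aux_chain_inv_dist} in place, whereas you invoke that identity as a black box; you also supply the boundedness of $U_T$, $U_X$ and the integrability check for Fubini, which the paper leaves implicit.
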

\begin{proof}
	Fix $g \in L_2(\widetilde{\pi})$ and $h \in L_2(\pi)$, then
	\begin{align*}
		&\langle U_T g, h \rangle_{\pi} \\
		&= \int_{\R^d} (U_T g)(x) h(x) \pi(\d x) \\ % def inner product
		&= \int_{\R^d} \int_{\ooint{0}{\infty}} g(t) \frac{\ind_{L(t,\varrho_1)}(x) \d t}{\varrho_1(x)} \; h(x) \, C^{-1} \varrho_0(x) \varrho_1(x) \d x \\ % def (U_T g) and \pi 
		&= \int_{\ooint{0}{\infty}} g(t) \, C^{-1} \int_{L(t,\varrho_1)} h(x) \varrho_0(x) \d x \d t \\ % cancellation of \varrho_1-terms, Fubini, pulling unwanted stuff out of inner integral, absorbing level-set indicator into integration bounds
		&= \int_{\ooint{0}{\infty}} g(t) (U_X h)(t) \cdot \widetilde{\varrho}(t) \d t % def (U_X h) and \widetilde{\varrho}
		= \langle g, U_X h \rangle_{\widetilde{\pi}} . % def inner product
		\qedhere 
	\end{align*}
\end{proof}

\begin{lemma} \label{Lem:linop_identities}
	In the setting of Section \ref{Sec:gap_est_tool}, interpret the measures $\pi$, $\widetilde{\pi}$ as kernels and view these as linear operators (cf.~\eqref{Eq:pi_as_operator}). Then we have the following relations between the operators\footnote{In \eqref{it: aux_2} and \eqref{it: aux_3} we consider $\pi$ either as an operator mapping $L_2(\pi) \ra L_2(\pi)$ or as one mapping $L_2(\pi) \ra L_2(\widetilde{\pi})$, which we may do because $\pi$ only maps to constant functions. The same applies for $\widetilde{\pi}$ by interchanging the roles of $L_2(\pi)$ and $L_2(\widetilde{\pi})$.} $\pi$, $\widetilde{\pi}$, $U_T$ and $U_X$:
	\begin{enumerate}[(i)]
		\item \label{it: aux_1}
		$\pi$ is the adjoint operator of $\widetilde{\pi}$,
		\item \label{it: aux_2}
		$U_T \pi = \widetilde{\pi} U_X = \widetilde{\pi} \pi = \pi$,
		\item \label{it: aux_3}
		$U_X \widetilde{\pi} = \pi U_T = \pi \widetilde{\pi} = \widetilde{\pi}$.
	\end{enumerate}
\end{lemma}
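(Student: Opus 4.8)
The plan is to treat $\pi$ and $\widetilde{\pi}$, exactly as the statement instructs, as transition kernels that are constant in their first argument: $\pi(t,A)=\pi(A)$ viewed as a kernel on $\ooint{0}{\infty}\times\B(\R^d)$, and $\widetilde{\pi}(x,B)=\widetilde{\pi}(B)$ viewed as a kernel on $\R^d\times\B(\ooint{0}{\infty})$. Via \eqref{Eq:MO_right} the induced operators then send an arbitrary function to the constant function equal to its integral, i.e.\ $(\pi h)(t)=\pi(h)$ for $h\in L_2(\pi)$ and $(\widetilde{\pi}g)(x)=\widetilde{\pi}(g)$ for $g\in L_2(\widetilde{\pi})$; since constant functions are square integrable, the operators $\pi\colon L_2(\pi)\to L_2(\widetilde{\pi})$ and $\widetilde{\pi}\colon L_2(\widetilde{\pi})\to L_2(\pi)$ are well defined, which is the only point where one must keep the domains and codomains straight. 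With this set-up each claimed identity reduces to a short direct computation.

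For (i) I would expand both inner products: for $g\in L_2(\widetilde{\pi})$ and $h\in L_2(\pi)$,
\begin{equation*}
	\langle \widetilde{\pi}g,h\rangle_{\pi}
	=\int_{\R^d}\widetilde{\pi}(g)\,h(x)\,\pi(\d x)
	=\widetilde{\pi}(g)\,\pi(h)
	=\int_{\ooint{0}{\infty}}g(t)\,\pi(h)\,\widetilde{\pi}(\d t)
	=\langle g,\pi h\rangle_{\widetilde{\pi}},
\end{equation*}
which is precisely the assertion that $\pi$ is the adjoint of $\widetilde{\pi}$.

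For (ii) and (iii) I would split the six equalities into four ``trivial'' ones and two substantive ones. Composing any kernel with one that is constant in its first argument wipes out the left factor, and $U_T,U_X$ being Markov ($U_T(x,\ooint{0}{\infty})=1$, $U_X(t,\R^d)=1$) takes care of the rest: for instance $\widetilde{\pi}\pi(x,A)=\int_{\ooint{0}{\infty}}\pi(t,A)\,\widetilde{\pi}(x,\d t)=\pi(A)$, and analogously $\pi\widetilde{\pi}=\widetilde{\pi}$, $U_T\pi(x,A)=\int_{\ooint{0}{\infty}}\pi(A)\,U_T(x,\d t)=\pi(A)$, and $U_X\widetilde{\pi}=\widetilde{\pi}$. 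The genuinely substantive equalities are $\widetilde{\pi}U_X=\pi$ and $\pi U_T=\widetilde{\pi}$, and these are nothing more than the normalization/marginal identity already contained in the proof of Lemma~\ref{Lem:aux_chain_inv_dist}: that proof shows $(\pi\otimes U_T)(A\times B)=(\widetilde{\pi}\otimes U_X)(B\times A)$ for all $A\in\B(\R^d)$ and $B\in\B(\ooint{0}{\infty})$, and specializing to $B=\ooint{0}{\infty}$ gives $\pi(A)=\int_{\ooint{0}{\infty}}U_X(t,A)\,\widetilde{\pi}(\d t)=\widetilde{\pi}U_X(A)$, while specializing to $A=\R^d$ gives $\widetilde{\pi}(B)=\int_{\R^d}U_T(x,B)\,\pi(\d x)=\pi U_T(B)$. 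Alternatively, either of these follows from a one-line Tonelli computation using $\widetilde{\varrho}(t)=C^{-1}\ell_{\varrho_0,\varrho_1}(t)$ together with $\varrho=\varrho_0\varrho_1$.

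Since the lemma is essentially bookkeeping, I do not anticipate a real obstacle. The only two things demanding care are tracking which $L_2$-space is the domain and which the codomain of each operator so that the compositions and the adjointness statement in (i) are typed consistently, and recognizing that the two nontrivial equalities $\widetilde{\pi}U_X=\pi$ and $\pi U_T=\widetilde{\pi}$ are exactly the marginalization identity already proved in Lemma~\ref{Lem:aux_chain_inv_dist}, rather than something new.
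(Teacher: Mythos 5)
Your proof is correct and follows essentially the same route as the paper: part~(i) via the same one-line inner-product expansion, and for (ii)--(iii) the four ``left-factor collapses under a constant kernel'' or ``Markov normalization'' equalities plus the two substantive ones $\widetilde{\pi}U_X=\pi$ and $\pi U_T=\widetilde{\pi}$. The only cosmetic difference is that you derive the two substantive identities by specializing the equation $(\pi\otimes U_T)(A\times B)=(\widetilde{\pi}\otimes U_X)(B\times A)$ already established in the proof of Lemma~\ref{Lem:aux_chain_inv_dist}, whereas the paper simply repeats the underlying Fubini computation inline; both are the same calculation, and your cross-reference is a tidy way to avoid duplication.
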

\begin{proof}
	We frequently emphasize that $\pi$ and $\widetilde{\pi}$ only map to constant functions by dropping the argument in our notation.
	To prove the claims, we arbitrarily fix $g \in L_2(\widetilde{\pi})$, $h \in L_2(\pi)$, $x \in \R^d$ and $t \in \ooint{0}{\infty}$. \\
	\textbf{To \eqref{it: aux_1}:} We have
	\begin{align*}
		\langle \widetilde{\pi} g, h \rangle_{\pi}
		&= \int_{\R^d} (\widetilde{\pi} g)(y) h(y) \pi(\d y) % def inner product
		= (\widetilde{\pi} g) \cdot (\pi h) \\ % (\widetilde{\pi} g) constant, def (\pi h)
		&= \int_{\ooint{0}{\infty}} g(s) \, (\pi h)(s) \, \widetilde{\pi}(\d s) % def (\tilde{\pi g})
		= \langle g, \pi h \rangle_{\widetilde{\pi}} . % def inner product
	\end{align*}
	\textbf{To \eqref{it: aux_2}:} Observe 
	\begin{align*}
		(U_T \pi h)(x)
		&= \int_{\ooint{0}{\infty}} (\pi h)(s) U_T(x, \d s) \\ % def composition
		&= (\pi h) \int_{\ooint{0}{\infty}} U_T(x, \d s) % (\pi h) constant
		= \pi h , % U(x,\cdot) normalized
	\end{align*}
	showing $U_T \pi = \pi$, and
	\begin{align*}
		\widetilde{\pi} U_X h
		&= \int_{\ooint{0}{\infty}} (U_X h)(s) \widetilde{\pi}(\d s) \\ % def composition
		&= \int_{\ooint{0}{\infty}} \frac{\int_{L(s,\varrho_1)} h(y) \varrho_0(y) \d y}{\int_{L(s,\varrho_1)} \varrho_0(y) \d y} \widetilde{\pi}(\d s) \\ % def U_X
		&= C^{-1} \int_{\ooint{0}{\infty}} \int_{L(s,\varrho_1)} h(y) \varrho_0(y) \d y \d s \\ % implicitly rewriting \widetilde{\pi}(\d t) by means of density \widetilde{\varrho} and cancelling it with denominator (except for the C^{-1})
		&= C^{-1} \int_{\R^d} h(y) \varrho_0(y) \int_{\ooint{0}{\infty}} \ind_{L(s,\varrho_1)}(y) \d s \d y \\ % implicit Fubini
		&= C^{-1} \int_{\R^d} h(y) \varrho_0(y) \varrho_1(y) \d y \\ % computing integral over t
		&= \int_{\R^d} h(y) \pi(\d y) % def \varrho
		= \pi h , % def (\pi h)
	\end{align*}
	showing $\widetilde{\pi} U_X = \pi$. Finally
	\begin{equation*}
		\widetilde{\pi} \pi h 
		= \int_{\ooint{0}{\infty}} (\pi h)(s) \widetilde{\pi}(\d s) % def composition
		= (\pi h) \; \int_{\ooint{0}{\infty}} \widetilde{\pi}(\d s) % (\pi h) constant
		= \pi h ,
	\end{equation*}
	shows $\widetilde{\pi} \pi = \pi$, such that the claimed equalities readily follow. \\
	\textbf{To \eqref{it: aux_3}:} Similarly as before we have
	\begin{align*}
		(U_X \widetilde{\pi} g)(t)
		&= \int_{\R^d} (\widetilde{\pi} g)(y) U_X(t, \d y) \\
		&= (\widetilde{\pi} g) \int_{\R^d} U_X(t, \d y)
		= \widetilde{\pi} g ,
	\end{align*}
	showing $U_X \widetilde{\pi} = \widetilde{\pi}$, and
	\begin{align*}
		\pi U_T g
		&= \int_{\R^d} (U_T g)(y) \pi(\d y) \\ % def composition
		&= C^{-1} \int_{\R^d} \int_{\ooint{0}{\infty}} g(s) \frac{\ind_{L(s,\varrho_1)}(y) \d s}{\varrho_1(y)} \varrho_0(y) \varrho_1(y) \d y \\ % def (U_T g) and \pi 
		&= C^{-1} \int_{\R^d} \int_{\ooint{0}{\infty}} g(s) \varrho_0(y) \ind_{L(s,\varrho_1)}(y) \d s \d y \\ % cancellation of \varrho_1s, relocation of \varrho_0
		&= \int_{\ooint{0}{\infty}} g(s) \widetilde{\varrho}(s) \d s % implicit Fubini, def \widetilde{\varrho}
		= \widetilde{\pi} g ,
	\end{align*}
	showing $\pi U_T = \widetilde{\pi}$. Finally 
	\begin{equation*}
		\pi \widetilde{\pi} g
		= \int_{\R^d} (\widetilde{\pi} g)(x) \pi(\d x)
		= (\widetilde{\pi} g) \int_{\R^d} \pi(\d x)
		= \widetilde{\pi} g ,
	\end{equation*}
	shows $\pi \widetilde{\pi} = \widetilde{\pi}$, such that the claimed equalities readily follow.
	\qedhere
\end{proof}

\begin{proposition} \label{Prop:polar_coords}
	For any measurable and integrable function $g: (\R^d,\B(\R^d)) \ra (\R,\B(\R))$, we have
	\begin{equation*}
		\int_{\R^d} g(x) \d x
		= \int_{\sph^{d-1}} \int_0^{\infty} g(r \theta) r^{d-1} \d r \sigma_{d-1}(\d \theta) .
	\end{equation*}
\end{proposition}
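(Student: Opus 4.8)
The plan is to reduce the identity to the case of indicator functions and then to a single Jacobian/normalization computation. First I would observe that, by linearity over simple functions, the monotone convergence theorem for non-negative measurable $g$, and the decomposition $g = g^+ - g^-$ for integrable $g$, it suffices to prove the claim for $g = \ind_B$ with $B \in \B(\R^d)$. Since $\{0\}$ is null both for $\lambda$ on $\R^d$ and for the right-hand side, one may restrict attention to $\R^d \setminus \{0\}$ and to the polar coordinate map $\Phi\colon \ooint{0}{\infty} \times \sph^{d-1} \ra \R^d \setminus \{0\}$, $\Phi(r,\theta) = r\theta$, which is a homeomorphism (a diffeomorphism for the usual smooth structure on $\sph^{d-1}$). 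By Tonelli's theorem the set function $\nu(B) := \int_{\sph^{d-1}} \int_0^{\infty} \ind_B(r\theta)\, r^{d-1}\, \d r\, \sigma_{d-1}(\d\theta)$ is a well-defined $\sigma$-finite Borel measure on $\R^d$, so by the uniqueness theorem for measures it is enough to check $\nu = \lambda$ on a generating family that is closed under finite intersections.

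I would use the family of polar boxes $C_{a,b,A} := \Phi(\cointv{a}{b} \times A) = \{ r\theta : a \le r < b,\ \theta \in A \}$ with $0 \le a < b < \infty$ and $A \in \B(\sph^{d-1})$; these, together with $\{0\}$, generate $\B(\R^d)$ and are stable under intersections. Tonelli gives $\nu(C_{a,b,A}) = \sigma_{d-1}(A)\,(b^d - a^d)/d$, so the whole statement reduces to $\lambda(C_{a,b,A}) = \sigma_{d-1}(A)\,(b^d - a^d)/d$, and by the scaling law $\lambda(tE) = t^d \lambda(E)$ further to the single identity $\lambda(C_{0,1,A}) = \sigma_{d-1}(A)/d$, i.e. to the assertion that the Lebesgue volume of the unit cone over $A$ equals $\sigma_{d-1}(A)/d$. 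If $\sigma_{d-1}$ is \emph{defined} by $\sigma_{d-1}(A) := d\,\lambda(C_{0,1,A})$, this is a tautology and the proof is complete; if $\sigma_{d-1}$ is instead taken as the $(d-1)$-dimensional Hausdorff measure on $\sph^{d-1}$ (equivalently the Riemannian volume of the round sphere), one derives it from the change-of-variables formula in $\R^d$ applied in local charts: for a chart $u \mapsto \theta(u)$ of $\sph^{d-1}$ the map $\Psi(r,u) = r\,\theta(u)$ has Jacobian of absolute value $r^{d-1}\sqrt{\det\bigl(\langle \partial_{u_i}\theta, \partial_{u_j}\theta\rangle\bigr)}$, the decisive point being that the radial column $\partial_r\Psi = \theta$ is a unit vector orthogonal to the angular columns $\partial_{u_i}\Psi = r\,\partial_{u_i}\theta$, because $|\theta| \equiv 1$ forces $\langle \theta, \partial_{u_i}\theta\rangle = 0$. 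Summing over a partition of unity subordinate to an atlas then recovers $\int \sqrt{\det g}\, \d u = \sigma_{d-1}$ on the angular factor and $\int_a^b r^{d-1}\, \d r$ on the radial factor.

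The only genuinely non-bookkeeping step is this last Jacobian/normalization identity $\lambda(C_{0,1,A}) = \sigma_{d-1}(A)/d$; everything else — joint measurability of $(r,\theta)\mapsto g(r\theta)$, $\sigma$-finiteness, the $\cap$-stable generator argument, and the passage from indicators to integrable $g$ — is standard measure theory. If the paper adopts the cone-volume identity as the definition of $\sigma_{d-1}$, the proposition is almost immediate; otherwise the crux is the orthogonality $\langle \theta, \partial_{u_i}\theta\rangle = 0$, which is precisely what makes the radial weight come out to be exactly $r^{d-1}$.
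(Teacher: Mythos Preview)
Your proposal is correct and is essentially the standard textbook argument for the polar integration formula. The paper, however, does not prove this proposition at all: its entire ``proof'' is the one-line citation ``See \cite[Theorem 15.13]{Schilling}.'' So there is no approach to compare against beyond noting that the result you sketch is precisely the kind of argument one finds in Schilling's book (reduction to indicators via monotone class / monotone convergence, identification of the pushforward of Lebesgue measure under the polar map with $r^{d-1}\,\d r \otimes \sigma_{d-1}$, and the Jacobian computation hinging on $\langle \theta, \partial_{u_i}\theta\rangle = 0$). Your write-up is therefore strictly more informative than the paper's own treatment of this auxiliary result.
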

\begin{proof}
	See \cite[Theorem 15.13]{Schilling}.
\end{proof}

\begin{lemma} \label{Lem:convex_inverse}
	Assume $h: I_1 \ra I_2$ is a strictly monotone, continuous function mapping between open intervals $I_1, I_2 \subseteq \R$. Then it is clear that $h$ maps bijectively onto its image, w.l.o.g.~$I_2$, with the inverse $h^{-1}: I_2 \ra I_1$ having the same monotonicity property. Moreover,
	\begin{itemize}
		\item if $h$ is increasing, then it is convex if and only if $h^{-1}$ is concave
		\item if $h$ is decreasing, then it is convex if and only if $h^{-1}$ is convex.
	\end{itemize}
\end{lemma}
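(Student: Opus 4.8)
\textbf{Proof proposal for Lemma~\ref{Lem:convex_inverse}.} The plan is to reduce the claim to elementary properties of convex functions on intervals, using the characterization of convexity via chords (secant lines) rather than via derivatives, since no differentiability is assumed. First I would fix two points $y_1 < y_2$ in $I_2$ together with a convex combination $y_\lambda := (1-\lambda) y_1 + \lambda y_2$ for $\lambda \in \ooint{0}{1}$, and set $x_i := h^{-1}(y_i)$. The key observation is that if $h$ is increasing, then $h^{-1}$ is increasing, so $x_1 < x_2$, and one wants to compare $h^{-1}(y_\lambda)$ with the chord value $(1-\lambda) x_1 + \lambda x_2$. Writing $x_\mu := (1-\mu) x_1 + \mu x_2$ for the (unique) $\mu$ with $h(x_\mu) = y_\lambda$, the statement ``$h^{-1}$ concave'' amounts to $\mu \le \lambda$, i.e. $x_\mu$ lies to the left of the chord point $x_\lambda$ in the domain.

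Next I would translate the hypothesis ``$h$ convex'' into the same picture: convexity of $h$ says $h(x_\lambda) \le (1-\lambda) h(x_1) + \lambda h(x_2) = (1-\lambda) y_1 + \lambda y_2 = y_\lambda = h(x_\mu)$. Since $h$ is increasing, this gives $x_\lambda \le x_\mu$ — wait, that is the wrong direction, so I must be careful: $h(x_\lambda) \le h(x_\mu)$ and $h$ increasing give $x_\lambda \le x_\mu$, hence $\lambda \le \mu$. Let me recompute: we want $h^{-1}$ concave, meaning $h^{-1}(y_\lambda) \ge (1-\lambda) h^{-1}(y_1) + \lambda h^{-1}(y_2) = x_\lambda$, i.e. $x_\mu \ge x_\lambda$, i.e. $\mu \ge \lambda$, which is exactly what convexity of $h$ delivered. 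So the forward implication is: $h$ convex $\Rightarrow$ $h(x_\lambda) \le y_\lambda = h(x_\mu) \Rightarrow x_\lambda \le x_\mu \Rightarrow h^{-1}(y_\lambda) = x_\mu \ge x_\lambda = $ chord value $\Rightarrow h^{-1}$ concave. The converse runs by the symmetric argument, swapping the roles of $h$ and $h^{-1}$ (and using that $h$, being strictly monotone and continuous between open intervals, is the inverse of $h^{-1}$). For the decreasing case, the only change is that $h^{-1}$ is decreasing, so $x_1 > x_2$; reparametrizing by $\tilde x_i$ in increasing order (or equivalently applying the increasing case to $h$ composed with a reflection $r \mapsto -r$ on one side) flips the concavity conclusion to convexity, which is the asserted statement. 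Concretely, the cleanest route is: if $h\colon I_1 \to I_2$ is decreasing and convex, then $\tilde h(r) := h(-r)$ on $-I_1$ is increasing and convex, so $\tilde h^{-1}$ is concave; but $\tilde h^{-1}(y) = -h^{-1}(y)$, so $h^{-1}$ is convex — and conversely.

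The only genuinely delicate point, and the step I expect to need the most care, is making sure that all the intermediate quantities are well-defined: that $x_\mu$ with $h(x_\mu) = y_\lambda$ exists and is unique (this is where strict monotonicity and continuity of $h$, i.e. the intermediate value theorem, enter), and that $x_\lambda$ and $x_\mu$ both lie in $I_1$ (automatic since $I_1$ is an interval and $x_1, x_2 \in I_1$). Everything else is bookkeeping with the two monotonicity directions. I would structure the writeup as: (1) recall the chord characterization of convexity/concavity on an interval; (2) prove the increasing case in both directions via the computation above; (3) deduce the decreasing case by the reflection trick $r \mapsto -r$; and (4) note that the strict-monotone-continuous hypothesis is exactly what guarantees $h$ is a bijection onto the open interval $I_2$ with continuous strictly monotone inverse, so that the roles of $h$ and $h^{-1}$ are fully symmetric.
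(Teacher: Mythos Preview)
Your argument is correct and follows essentially the same route as the paper's proof: both use the chord characterization of convexity and the bijection $s_i = h(r_i)$ to translate the convexity inequality for $h$ into the concavity/convexity inequality for $h^{-1}$, with monotonicity determining the direction of the resulting inequality. The paper's version is marginally more streamlined---it writes a single chain of equivalences (handling both implications at once) and treats the increasing and decreasing cases uniformly by just tracking the direction in which $h^{-1}$ preserves or reverses the inequality, whereas you introduce the auxiliary parameter $\mu$ and split off the decreasing case via the reflection $r \mapsto -r$---but these are purely cosmetic differences.
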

\begin{proof}
	Denote by $\alpha \in \ccintv{0}{1}$ and $r_1, r_2 \in I_1$ arbitrary elements of the respective sets, then, due to the bijectivity of $h$ onto $I_2$, the values $s_i := h(r_i)$, $i=1,2$, are arbitrary elements of $I_2$. Now
	\begin{align*}
		&h \text{ convex} \\
		&\LRA \quad h(\alpha r_1 + (1-\alpha) r_2) \leq \alpha h(r_1) + (1-\alpha) h(r_2) \\ % def
		&\LRA \quad h(\alpha h^{-1}(s_1) + (1-\alpha) h^{-1}(s_2)) \leq \alpha s_1 + (1-\alpha) s_2 \\ % r_i = h^{-1}(t_i)
		&\LRA \quad \alpha h^{-1}(s_1) + (1-\alpha) h^{-1}(s_2) \\
		&\qquad\qquad\quad 
		\begin{cases} 
			\leq h^{-1}(\alpha s_1 + (1-\alpha) s_2) & h^{-1} \text{ increasing} \\
			\geq h^{-1}(\alpha s_1 + (1-\alpha) s_2) & h^{-1} \text{ decreasing} 
		\end{cases} \\ % apply h^{-1} to both sides
		&\LRA \quad 
		\begin{cases}
			h^{-1} \text{ concave} & h \text{ increasing} \\
			h^{-1} \text{ convex} & h \text{ decreasing},
		\end{cases} % def
	\end{align*}
	where in the last step we use the fact that $h$ and $h^{-1}$ have the same monotonicity property.
\end{proof}

\begin{proposition} \label{Prop:alternative_cond}
	Let $\ell: \ooint{0}{\infty} \ra \R_+$ be a continuous function that satisfies conditions \eqref{Lambda_1st_item} and \eqref{Lambda_2nd_item} of Definition \ref{Def:Lambda_k} and let $k \in \N$. Then the following two statements are equivalent:
	\begin{enumerate}
		\item The function $s \mapsto \ell(\exp(-s))^{1/k}$ is concave on the domain 
		\begin{equation*}
			D := \oointv{- \log \sup\{t \in \ooint{0}{\infty} \; \colon \ell(t) > 0\}}{\infty} .
		\end{equation*}
		\item The function $\ell$ satisfies condition \eqref{Lambda_3rd_item} of Definition \ref{Def:Lambda_k}.
	\end{enumerate}	
\end{proposition}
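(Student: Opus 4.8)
The plan is to link the two functions by an explicit monotone bijection and then to reduce the equivalence to Lemma~\ref{Lem:convex_inverse}. Write $b := \sup\{t \in \ooint{0}{\infty} \colon \ell(t) > 0\}$, so that $\supp(\ell) = \ooint{0}{b}$ and $D = \ooint{-\log b}{\infty}$ (with $-\log b := -\infty$ when $b = \infty$). As recalled in the Remark after Definition~\ref{Def:Lambda_k}, conditions \eqref{Lambda_1st_item}, \eqref{Lambda_2nd_item} together with continuity make $\ell|_{\ooint{0}{b}}$ a strictly decreasing continuous bijection onto $I_\ell = \ooint{0}{\L}$, with strictly decreasing inverse $\ell^{-1}\colon \ooint{0}{\L} \to \ooint{0}{b}$ (the one used in condition \eqref{Lambda_3rd_item}).

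First I would introduce $h(s) := \ell(\exp(-s))^{1/k}$ for $s \in D$ and record its elementary properties: being a composition of $s \mapsto \exp(-s)$ (strictly decreasing), $\ell|_{\ooint{0}{b}}$ (strictly decreasing) and $r \mapsto r^{1/k}$ (increasing), $h$ is continuous and strictly increasing on $D$. Inspecting the endpoint behavior --- $\ell(t) \to \L$ as $t \searrow 0$, and $\ell(t) \to 0$ as $t \nearrow b$ (which follows from condition \eqref{Lambda_1st_item} if $b = \infty$, and from the definition of $b$ together with continuity if $b < \infty$) --- shows that $h$ is in fact a bijection from $D$ onto $\ooint{0}{\L^{1/k}}$, i.e.\ exactly onto the domain of the function $g$ from Definition~\ref{Def:Lambda_k}\,\eqref{Lambda_3rd_item}.

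Next I would compute the inverse. Solving $\ell(\exp(-s))^{1/k} = r$ for $s$ gives $\exp(-s) = \ell^{-1}(r^k)$ and hence
\[
	h^{-1}(r) = -\log \ell^{-1}(r^k) = -\log g(r), \qquad r \in \ooint{0}{\L^{1/k}} .
\]
Thus Statement~1 says precisely that $h$ is concave, whereas Statement~2 says that $\log g = -h^{-1}$ is concave, i.e.\ that $h^{-1}$ is convex. Since $h^{-1}$ is a strictly increasing, continuous map between open intervals, Lemma~\ref{Lem:convex_inverse} applied to $h^{-1}$ (whose inverse is $h$) gives: $h^{-1}$ is convex $\iff$ $(h^{-1})^{-1} = h$ is concave. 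Concatenating the displayed identity with this equivalence proves the proposition.

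I do not expect a genuine obstacle; the argument is essentially bookkeeping. The two spots that need attention are: (a) pinning down that the domain of $h$ is exactly $\ooint{0}{\L^{1/k}}$, which hinges on the endpoint limits of $\ell$ on $\supp(\ell)$ and thus uses continuity together with \eqref{Lambda_1st_item}; and (b) applying Lemma~\ref{Lem:convex_inverse} in the correct direction --- since that lemma characterizes \emph{convexity} of the forward map, one must feed it $h^{-1}$ (not $h$) in order to obtain the desired statement about \emph{concavity} of $h$.
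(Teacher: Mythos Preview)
Your proof is correct and essentially identical to the paper's: both set up the same strictly increasing continuous bijection between $D$ and $\ooint{0}{\L^{1/k}}$ and invoke Lemma~\ref{Lem:convex_inverse} to translate convexity of one side into concavity of the other; the only difference is that the paper names the map $r \mapsto -\log \ell^{-1}(r^k)$ as $h$ (your $h^{-1}$) and applies the lemma directly to it. The small slip in your closing remark~(a), where you write ``domain of $h$'' but mean its image, is cosmetic and does not affect the argument.
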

\begin{proof}
	By definition of log-concavity, we may rephrase condition \eqref{Lambda_3rd_item} as convexity of the function $h: \ooint{0}{\L^{1/k}} \; \ra D$ defined by
	\begin{equation*}
		h(r) 
		:= - \log \ell^{-1}(r^k) .
	\end{equation*}
	It is easy to verify that the inverse $h^{-1}: D \ra \ooint{0}{\L^{1/k}}$ of $h$ is given by
	\begin{equation*}
		h^{-1}(s)
		= \ell(\exp(-s))^{1/k} .
	\end{equation*}
	Because $h$ is composed of continuous functions, $h$ is also continuous. Moreover, since $h$ is a composition of the strictly decreasing functions $- \log$ and $\ell^{-1}$ and the strictly increasing function $r \mapsto r^k$, it is strictly increasing. Hence, by Lemma \ref{Lem:convex_inverse}, convexity of $h$ is equivalent to concavity of $h^{-1}$, which is precisely the condition stated in the proposition.
\end{proof}

%% BioMed_Central_Bib_Style_v1.01

\end{document}